\date{}
\renewcommand{\uppercasenonmath}[1]{}
\theoremstyle{plain}
\newtheorem{theorem}{Theorem}[section]
\newtheorem{proposition}[theorem]{Proposition}
\newtheorem{lemma}[theorem]{Lemma}
\newtheorem{corollary}[theorem]{Corollary}
\theoremstyle{definition}
\newtheorem{example}[theorem]{Example}
\newtheorem{definition}[theorem]{Definition}
\theoremstyle{definition}
\newtheorem*{acknowledgement}{Acknowledgement}
\theoremstyle{remark}
\newtheorem{remark}[theorem]{Remark}
\newcommand{\pf}{\noindent\begin {proof}}
\newcommand{\epf}{\end{proof}}
\newcommand{\Ker}{\mbox{\rm Ker}}
\newcommand{\Ext}{\mbox{\rm Ext}}
\newcommand{\Hom}{\mbox{\rm Hom}}
\newcommand{\Id}{\mathrm{Id}}
\def\ra{\rightarrow}
\def\Hom{{\rm Hom}}
\def\Ext{{\rm Ext}}
\def\ker{{\rm ker}}
\def\Ker{{\rm Ker}}
\def\SFT{{\rm SFT}}
\def\Im{{\rm Im}}
\def\Nil{{\rm Nil}}
\begin{document}
\begin{center}
{\large  \bf Nil$_{\ast}$-Noetherian rings}

\vspace{0.5cm}   Xiaolei Zhang$^{a}$

{\footnotesize a.\ School of Mathematics and Statistics, Shandong University of Technology, Zibo 255049, China\\

E-mail: zxlrghj@163.com\\}
\end{center}

\bigskip
\centerline { \bf  Abstract}
\bigskip
\leftskip10truemm \rightskip10truemm \noindent

In this paper, we say a ring $R$ is  Nil$_{\ast}$-Noetherian provided that any nil ideal is finitely generated. First, we show that the Hilbert basis theorem holds for Nil$_{\ast}$-Noetherian  rings, that is, $R$ is  Nil$_{\ast}$-Noetherian if and only if  $R[x]$ is  Nil$_{\ast}$-Noetherian, if and only if $R[[x]]$ is  Nil$_{\ast}$-Noetherian. Then we discuss some Nil$_{\ast}$-Noetherian properties on  idealizations and bi-amalgamated algebras. Finally, we give the Cartan-Eilenberg-Bass Theorem for  Nil$_{\ast}$-Noetherian rings in terms of Nil$_{\ast}$-injective modules and Nil$_{\ast}$-FP-injective modules. Besides, some examples are given to distinguish Nil$_{\ast}$-Noetherian rings, Nil$_{\ast}$-coherent rings and so on.
\vbox to 0.3cm{}\\
{\it Key Words:} Nil$_{\ast}$-Noetherian ring;  Hilbert basis theorem; Idealization; Bi-amalgamated algebra, Cartan-Eilenberg-Bass Theorem.\\
{\it 2020 Mathematics Subject Classification:}  16P40;16N40.

\leftskip0truemm \rightskip0truemm
\bigskip

Throughout this paper, all rings are  commutative  with identity and all modules are unitary. Let $R$ be a ring, we denote by $R[x]$ (resp., $R[[x]]$) the polynomial ring (resp., the formal series ring) in one variable over $R$. An element $r$ in $R$ is said to be nilpotent provided that $r^n=0$ for some positive integer $n$. The set of all nilpotent elements in $R$ is denoted by  $\Nil(R)$. An ideal $I$ of $R$ is said to be a nil ideal provided that any element in $I$ is nilpotent.

Recall that a ring $R$ is called to be a Noetherian ring if every ideal of $R$ is finitely generated. The concept of Noetherian rings, which was originally due to the mathematician Emmy Noether, is one of the most important topics that is widely used in ring theory, commutative algebra and algebraic geometry. The importance of Noetherian property was first shown in Hilbert basis theorem: if a ring $R$ is a Noetherian ring, then $R[x]$ and $R[[x]]$ are also Noetherian.  Noetherian rings also have many module-theoretic characterizations, such as the well-known  Cartan-Eilenberg-Bass Theorem states that a ring $R$  is Noetherian if and only if every direct sum of injective $R$-modules is injective, if and only if every direct limit of injective R-modules over a directed set is injective (see \cite[Theorem 4.3.4]{fk16} for example). Some new constructions of Noetherian rings, such as  idealizations and bi-amalgamated algebras, are also considered by many algebraists (see \cite{DW09,KLT17}). Several generalizations of Noetherian rings are introduced and studied by many algebraists. The famous generalization is the notion of coherent rings, i.e., rings in which any finitely generated ideal are finitely presented. For a further generalization, Xiang \cite{XO14} introduced the notions of $\Nil_{\ast}$-coherent rings in terms of nil ideals in 2014. A ring $R$ is said to be $\Nil_{\ast}$-coherent provided that any finitely generated nil ideal is finitely presented. Later in  2017, Alaoui  Ismaili et al.\cite{ADM17} studied the $\Nil_{\ast}$-coherent properties via idealization and amalgamated algebras under several assumptions.

The main motivation of this paper is to introduce and study the $\Nil_{\ast}$-Noetherian property of rings. Comparing with the concepts of Noetherian rings and  $\Nil_{\ast}$-coherent rings, we say a ring $R$ is  Nil$_{\ast}$-Noetherian provided that any nil ideal is finitely generated. It is important that the Hilbert basis theorem also holds for Nil$_{\ast}$-Noetherian  rings, that is, a ring $R$ is  Nil$_{\ast}$-Noetherian if and only if  $R[x]$ is  Nil$_{\ast}$-Noetherian, if and only if $R[[x]]$ is  Nil$_{\ast}$-Noetherian (see Theorem \ref{Hilbet} and Theorem \ref{Hilbet-series}). Utilizing these results, we study the idealization properties of Nil$_{\ast}$-Noetherian rings in Theorem \ref{idealization}, and then find  a $\Nil_{\ast}$-coherent ring that is not  Nil$_{\ast}$-Noetherian (see Example \ref{Nil-coh-not-Nil-Noe}). Surprisingly, Nil$_{\ast}$-Noetherian rings can also be not Nil$_{\ast}$-coherent (see Example \ref{nil-noe-not-nil-coh}).   By computing the nil-radical of the bi-amalgamated algebras under some assumptions (see Lemma \ref{nil-Ama}), we characterize  Nil$_{\ast}$-Noetherian properties of bi-amalgamated algebras in Proposition \ref{Amalgamation-1} and Theorem \ref{Amalgamation-2}. Finally, we give the Cartan-Eilenberg-Bass Theorem for  Nil$_{\ast}$-rings in terms of Nil$_{\ast}$-injective modules and Nil$_{\ast}$-FP-injective modules (See Theorem \ref{s-injective-ext}). In surprise, we  show that the direct limits of Nil$_{\ast}$-injective modules need not be Nil$_{\ast}$-injective  for Nil$_{\ast}$-Noetherian rings (see Remark \ref{dirlim-not}).

\section{Basic Properties of Nil$_{\ast}$-Noetherian Rings}

We begin with the concept of  Nil$_{\ast}$-Noetherian rings.
\begin{definition}\label{Nil-Noether}
 A ring $R$ is said to be a Nil$_{\ast}$-Noetherian ring provided that any nil ideal is finitely generated.
\end{definition}

Trivially, reduced rings and Noetherian rings are  Nil$_{\ast}$-Noetherian.

\begin{proposition}\label{nil-Noe-ideal}Let $R$ be a ring. Then the following assertions are equivalent:
\begin{enumerate}
\item  $R$ is  a Nil$_{\ast}$-Noetherian ring;
\item  $R$ satisfies the ascending chain condition on nil ideals;
\item  Every non-empty set of nil ideals of $R$ has a maximal element.
\end{enumerate}
\end{proposition}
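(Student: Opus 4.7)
The plan is to adapt the classical equivalence between the finitely-generated condition, the ascending chain condition, and the maximal-element condition (familiar from Noetherian rings) to the restricted class of nil ideals. The only nontrivial point to watch is that this restricted class is well behaved under the operations one needs, namely that a subideal of a nil ideal is nil and a union of a chain of nil ideals is nil. Both are immediate: any element of a subideal of a nil ideal is nilpotent, and any element of a union of a chain lies in one member of the chain, hence is nilpotent.

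For (1) $\Rightarrow$ (2), I would start with an ascending chain $I_1 \subseteq I_2 \subseteq \cdots$ of nil ideals, form $I = \bigcup_n I_n$, observe that $I$ is a nil ideal by the closure remark above, and use (1) to conclude that $I$ is finitely generated, say by $a_1, \dots, a_k$. Each generator lies in some $I_{n_j}$, so the chain stabilizes at $N = \max_j n_j$.

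For (2) $\Rightarrow$ (3), I would argue by contradiction: if a nonempty family $\mathcal{F}$ of nil ideals had no maximal element, then starting from any $I_1 \in \mathcal{F}$ one could recursively choose $I_{n+1} \in \mathcal{F}$ with $I_{n+1} \supsetneq I_n$, producing a strictly ascending chain of nil ideals and contradicting (2).

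For (3) $\Rightarrow$ (1), given a nil ideal $I$, I would look at the family $\mathcal{F}$ of finitely generated subideals of $I$. Every member is a subideal of a nil ideal, hence nil, and $\mathcal{F}$ is nonempty because it contains $(0)$. By (3), $\mathcal{F}$ has a maximal element $J$. If $J \subsetneq I$, picking $x \in I \setminus J$ gives $J + (x) \in \mathcal{F}$ strictly containing $J$, a contradiction; hence $J = I$ and $I$ is finitely generated. The one step deserving a line of comment is precisely the closure under unions of chains, since for more restrictive ideal classes (e.g., prime ideals) the analogous equivalence can fail; here it works because nilpotence is a pointwise condition.
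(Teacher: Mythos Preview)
Your proof is correct and follows essentially the same classical argument as the paper: form the union of the chain for $(1)\Rightarrow(2)$, build a strictly increasing chain by contradiction for $(2)\Rightarrow(3)$, and take a maximal finitely generated subideal for $(3)\Rightarrow(1)$. Your explicit remarks that subideals and chain-unions of nil ideals are nil make the applicability of these steps clearer than in the paper, but the strategy is identical.
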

\begin{proof} $(1)\Rightarrow(2)$  Let $I_1\subseteq I_2\subseteq \cdots\subseteq I_j\subseteq \cdots$ be an ascending chain condition on nil ideals. Set $I=\bigcup\limits_{j=1}^{\infty}I_j$. Then $I$ is also a  nil ideal. Hence $I$ is finitely generated. Consequently, there exists $k\in\mathbb{Z}$ such that $I=I_k$.

$(2)\Rightarrow(3)$ Let $\Gamma$ be a non-empty set of nil ideals of $R$. On contrary, suppose $\Gamma$ has no  maximal element. For any $I_1\in\Gamma$, there exits $I_2\in \Gamma$ such that  $I_1\subsetneq I_2$ as $I_1$ is not maximal. Continuing these steps, there is an strictly ascending chain condition on nil ideals $I_1\subsetneq I_2\subsetneq \cdots\subseteq I_n\subsetneq \cdots$, which contradicts(2).

$(3)\Rightarrow(1)$ Let $I$ be a nil ideal of $R$. Denoted by $\Gamma$ the set of all finitely generated sub-ideal of $I$. Then there is a maximal element $J$ in $\Gamma$. We claim $J=I$. Indeed, if there is an element $r\in I-J$. Then $J+Rr$ is a finitely generated sub-ideal of $I$ which is strictly lager than $J$, which is a contraction. So $I$ is finitely generated.
\end{proof}

\begin{lemma}\label{quo}
Let $R$ be a Nil$_{\ast}$-Noetherian ring. If  $I$ is a nil ideal of $R$, then $R/I$ is Nil$_{\ast}$-Noetherian.
\end{lemma}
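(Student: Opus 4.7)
The plan is to take a nil ideal of $R/I$, pull it back to $R$, show the pullback is a nil ideal of $R$, and then use the Nil$_{\ast}$-Noetherian hypothesis on $R$ to conclude.

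First I would let $K$ be an arbitrary nil ideal of $R/I$ and write it as $K = J/I$ for the uniquely determined ideal $J$ of $R$ with $I \subseteq J$. The aim is to prove that $J$ itself is a nil ideal of $R$. To see this, pick any $r \in J$. Since $K = J/I$ is nil in $R/I$, the coset $r + I$ is nilpotent, so $r^{n} \in I$ for some positive integer $n$. But $I$ is nil by hypothesis, so $r^{n}$ is nilpotent in $R$, hence $r^{nm} = (r^{n})^{m} = 0$ for some positive integer $m$. Thus $r$ is nilpotent, which shows $J$ is a nil ideal of $R$.

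Now I would invoke the hypothesis that $R$ is Nil$_{\ast}$-Noetherian. This immediately gives that $J$ is finitely generated as an ideal of $R$, say $J = (r_{1}, \dots, r_{k})$. Passing to the quotient, the cosets $r_{1}+I, \dots, r_{k}+I$ generate $K = J/I$ as an ideal of $R/I$. Since $K$ was an arbitrary nil ideal of $R/I$, this shows $R/I$ is Nil$_{\ast}$-Noetherian.

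There is no serious obstacle here; the only thing to verify carefully is the implication that $J/I$ being nil together with $I$ being nil forces $J$ to be nil, and this rests on the elementary fact that a two-step chain of nilpotence composes into ordinary nilpotence. Note that the hypothesis that $I$ is itself nil (rather than merely an arbitrary ideal) is essential at precisely this step; without it, one could only conclude $r^{n} \in I$ rather than $r$ being nilpotent in $R$.
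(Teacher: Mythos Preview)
Your proof is correct and follows essentially the same route as the paper: pull back a nil ideal $K=J/I$ of $R/I$ to the ideal $J$ of $R$, use nilpotence of $I$ to show $J$ is nil, and conclude $J$ (hence $K$) is finitely generated. Your write-up is in fact slightly more detailed than the paper's, and your closing remark about the necessity of the nil hypothesis on $I$ is apt and is borne out by Example~\ref{quo-not-nil-noe}.
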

\begin{proof} Let $K$ be  a nil ideal of $R/I$. Then $K=J/I$ for some $R$-ideal $J$ containing $I$. Then for any $j\in J$, there is an $n$ such that $j^n\in I$. Since $I$ is a nil ideal, $j^{nk}=0$ for some $k$. Hence $J$ is a nil ideal of $R$, and so is finitely generated. Hence $K$ is finitely generated $R/I$-ideal.
\end{proof}
Note that the condition ``$I$ is a nil ideal of $R$'' in Lemma \ref{quo}  cannot be removed.
\begin{example}\label{quo-not-nil-noe} Let $S=k[x_1,x_2,\cdots]$ be the polynomial ring over a field $k$ with countably infinite variables. Then $S$ is Nil$_{\ast}$-Noetherian.  Set the quotient ring $R=S/\langle x^2_i\mid i\geq 1\rangle$. Then $\Nil(R)=\langle \overline{x_1},\overline{x_2},\cdots \rangle$ is infinitely generated,  where $\overline{x_i}$ denotes the representative of $x_i$ in $R$ for each $i$. Hence $R$ is not  Nil$_{\ast}$-Noetherian.
\end{example}
\begin{proposition}\label{dire}
A finite direct product of rings $R=R_1\times \cdots\times R_n$ is Nil$_{\ast}$-Noetherian if and only if each $R_i$ is Nil$_{\ast}$-Noetherian $(i=1,\cdots,n).$
\end{proposition}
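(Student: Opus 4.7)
The plan is to reduce the statement to the standard decomposition of ideals in a finite direct product of commutative rings. Recall that if $R = R_1 \times \cdots \times R_n$ and $e_i = (0,\ldots,1,\ldots,0)$ denotes the obvious orthogonal idempotents, then every ideal $I$ of $R$ decomposes as $I = I_1 \times \cdots \times I_n$, where $I_i$ is the projection of $I$ onto $R_i$ (which is an ideal of $R_i$); moreover $I$ is finitely generated as an $R$-ideal if and only if each $I_i$ is finitely generated as an $R_i$-ideal. The second key observation is elementary: an element $(r_1,\ldots,r_n) \in R$ satisfies $(r_1,\ldots,r_n)^k = (r_1^k,\ldots,r_n^k) = 0$, so it is nilpotent if and only if each component $r_i$ is nilpotent. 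Consequently, a componentwise ideal $I_1 \times \cdots \times I_n$ is nil in $R$ if and only if each $I_i$ is nil in $R_i$.

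For the necessity ($\Rightarrow$), I would fix an index $i$ and a nil ideal $J_i$ of $R_i$, then consider the embedded ideal $J := 0 \times \cdots \times J_i \times \cdots \times 0$ of $R$. By the observation above $J$ is a nil ideal of $R$, hence finitely generated by hypothesis. Projecting a finite generating set down to the $i$-th coordinate produces a finite generating set for $J_i$ in $R_i$.

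For the sufficiency ($\Leftarrow$), I would start with a nil ideal $I$ of $R$, write $I = I_1 \times \cdots \times I_n$ via the idempotent decomposition, and note that each $I_i$ is then a nil ideal of $R_i$. By hypothesis each $I_i$ is finitely generated, say by $\{a_{i,1},\ldots,a_{i,k_i}\}$; lifting these to the embedded elements $(0,\ldots,a_{i,j},\ldots,0) \in R$ for $1 \le i \le n$ and $1 \le j \le k_i$ yields a finite generating set for $I$ as an $R$-ideal.

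No real obstacle is expected: the entire argument rests on the elementary idempotent decomposition for finite products together with the componentwise description of nilpotency. Induction on $n$ could be used to reduce to the case $n=2$, but it is no shorter than the direct treatment sketched above.
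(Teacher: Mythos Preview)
Your proposal is correct and follows essentially the same route as the paper: both rest on the componentwise description of nilpotency in a finite product, i.e., $\Nil(R)=\Nil(R_1)\times\cdots\times\Nil(R_n)$, together with the fact that an ideal of $R$ is finitely generated if and only if each of its component ideals is. The only cosmetic difference is that the paper packages the argument by observing that $R$ is Nil$_{\ast}$-Noetherian precisely when $\Nil(R)$ is a Noetherian $R$-module, and then invokes the standard fact that a finite product of modules is Noetherian if and only if each factor is; your version unwinds this directly at the level of individual nil ideals.
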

\begin{proof} It follows by $\Nil(R)=\Nil(R_1)\times \cdots\times \Nil(R_n)$ and $\Nil(R)$ is a Noetherian $R$-module if and only each $\Nil(R_i)$ is a Noetherian $R_i$-module ($i=1,\cdots,n$).
\end{proof}

\begin{remark} Suppose $R=\prod\limits_{i\in\Lambda}R_i$ is an infinite direct product of rings $R_i$. If $R$ is Nil$_{\ast}$-Noetherian, then trivially each direct summand $R_i$ is also Nil$_{\ast}$-Noetherian. However, the converse does not hold in general. Indeed, let $R_i=\mathbb{Z}_{p^i}$ the residue rings modulo $p^i$ where $p$ is a prime and $i$ an positive integer. Then each $R_i$ is Noetherian, and thus Nil$_{\ast}$-Noetherian. However, the direct product $R=\prod\limits_{i=1}^{\infty}R_i=\prod\limits_{i=1}^{\infty}\mathbb{Z}_{p^i}$ is not Nil$_{\ast}$-Noetherian since the nil ideal $\bigoplus\limits_{i=1}^{\infty}\Nil(\mathbb{Z}_{p^i})$ is not finitely generated.
\end{remark}

\begin{proposition}\label{ret}
Let $\phi:R\rightarrow S$ be a ring homomorphism making $R$ a module retract of $S$. If $S$ is Nil$_{\ast}$-Noetherian, then $R$ is also Nil$_{\ast}$-Noetherian.
\end{proposition}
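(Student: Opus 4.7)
The plan is to take an arbitrary nil ideal $I\subseteq R$ and show it is finitely generated, by extending it to $S$, using the Nil$_{\ast}$-Noetherian hypothesis there, and then pulling back via the retraction. Let $\psi\colon S\to R$ denote an $R$-module retract of $\phi$, so $\psi\circ\phi=\mathrm{id}_R$ and $S$ becomes an $R$-module via $\phi$ with $r\cdot s=\phi(r)s$.

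Step 1. Form the extended ideal $J:=\phi(I)S$ of $S$ and verify that $J$ is a nil ideal. Since $\phi$ is a ring homomorphism, each $\phi(a)$ with $a\in I$ is nilpotent in $S$; by commutativity of $S$, every monomial $\phi(a)s$ is then nilpotent, and finite sums of nilpotents in a commutative ring are nilpotent, so every element of $J$ is nilpotent. Hence $J$ is a nil ideal.

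Step 2. By hypothesis, $J$ is a finitely generated $S$-ideal. A standard tidying-up argument lets us pick the generators from $\phi(I)$: writing each of finitely many generators as an $S$-combination of elements $\phi(a_{kj})$ with $a_{kj}\in I$, the set $\{\phi(a_{kj})\}$ already generates $J$. So we may assume $J=(\phi(a_1),\ldots,\phi(a_m))S$ for some $a_1,\ldots,a_m\in I$. Given any $a\in I$, we have $\phi(a)\in J$, hence $\phi(a)=\sum_{i=1}^m\phi(a_i)s_i$ in $S$ for suitable $s_i\in S$. Applying $\psi$ and using that $\phi(a_i)s_i=a_i\cdot s_i$ under the $R$-module structure on $S$, the $R$-linearity of $\psi$ yields
\[
a=\psi(\phi(a))=\sum_{i=1}^m a_i\,\psi(s_i)\in(a_1,\ldots,a_m)R.
\]
Therefore $I=(a_1,\ldots,a_m)R$ is finitely generated, which proves $R$ is Nil$_{\ast}$-Noetherian.

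The only potential pitfall is conceptual rather than computational: one must remember that $\psi$ is merely an $R$-module map and not a ring map, and apply it to the relation $\phi(a)=\sum\phi(a_i)s_i$ in the correct module structure. This is exactly the step that converts the $S$-ideal finiteness of $J$ into the $R$-ideal finiteness of $I$.
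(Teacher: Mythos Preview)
Your proof is correct and follows essentially the same approach as the paper: extend the nil ideal $I$ to $J=\phi(I)S$, observe $J$ is nil and hence finitely generated, then use the retraction $\psi$ to pull the finite generation back to $I$. The only cosmetic difference is that you first refine the generators of $J$ to lie in $\phi(I)$ and conclude $I=(a_1,\ldots,a_m)$, whereas the paper takes arbitrary generators $s_1,\ldots,s_n$ of $J$ and shows directly that $\psi(s_1),\ldots,\psi(s_n)$ generate $I$; both yield the same conclusion with the same underlying idea.
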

\begin{proof} We can assume that $\phi$ is an inclusion map. Let $\psi:S\rightarrow R$ be an $R$-homomorphism such that $\psi\circ\phi=\Id_R$. Let $I$ be a nil ideal of $R$. Then $IS:=\{\sum\limits_{i=1}^tr'_is'_i\mid r'_i\in I, s'_i\in S\}$ is a nil ideal of $S$, and thus is finitely generated, say by $\{s_1,\cdots,s_n\}$. We will show $I$ is generated by $\{\psi(s_1),\cdots,\psi(s_n)\}$ as an $R$-module. Indeed, let $x$ be an element in $I$. Then $x=x1=\sum\limits_{i=1}^nr_is_i$ for some $r_i\in R$. Then $x=\psi\circ\phi(x)=\psi\circ\phi(\sum\limits_{i=1}^nr_is_i)=\sum\limits_{i=1}^nr_i\psi\circ\phi(s_i)=\sum\limits_{i=1}^nr_i\psi(s_i)$.
Hence $I$ is finitely generated. Consequently, $R$ is a Nil$_{\ast}$-Noetherian ring.
\end{proof}
Next, we will focus on the Nil$_{\ast}$-Noetherian properties of polynomial rings.
\begin{lemma}\label{nil-ele}\cite[Theorem 1.7.7(2)]{fk16}
Let $R$ be a ring. An element $f=\sum_{i=0}^na_ix^i\in $ is a nilpotent element in $R[x]$ if and only if each $a_i$ is a nilpotent element in $R$ $(i=0,\cdots,n).$
\end{lemma}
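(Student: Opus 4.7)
The plan is to prove the two implications separately, with the easy direction being sufficiency and the slightly more delicate direction being necessity, which I would handle by induction on the degree using the leading coefficient.

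For the sufficiency direction, suppose each $a_i$ is nilpotent in $R$. Then each monomial $a_i x^i$ satisfies $(a_i x^i)^{m_i} = a_i^{m_i} x^{i m_i} = 0$ for a suitable exponent $m_i$, so $a_i x^i \in \Nil(R[x])$. Since $R[x]$ is commutative, $\Nil(R[x])$ is an ideal (closed under addition by the binomial theorem). Therefore $f = \sum_{i=0}^n a_i x^i$ lies in $\Nil(R[x])$, i.e., $f$ is nilpotent.

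For the necessity direction, I would induct on the degree $n$ of $f$. The case $n=0$ is immediate from $f = a_0$. For the inductive step, suppose $f^k = 0$ for some positive integer $k$. Looking at the leading coefficient of $f^k$, which equals $a_n^k$, we see $a_n^k = 0$, so $a_n$ is nilpotent. Hence $a_n x^n$ is nilpotent, and since $\Nil(R[x])$ is an ideal, the polynomial
\[
g := f - a_n x^n = \sum_{i=0}^{n-1} a_i x^i
\]
is again nilpotent. Because $\deg g < n$, the induction hypothesis yields that $a_0, a_1, \ldots, a_{n-1}$ are all nilpotent, completing the argument.

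The key ingredient throughout is the fact that in a commutative ring the set of nilpotent elements forms an ideal; no serious obstacle arises. Since this lemma is attributed to \cite[Theorem 1.7.7(2)]{fk16}, the author presumably just cites it, but the above leading-coefficient induction gives a clean self-contained argument.
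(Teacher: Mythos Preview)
Your argument is correct. The paper does not supply its own proof of this lemma; it simply cites \cite[Theorem 1.7.7(2)]{fk16} and moves on, exactly as you anticipated. Your leading-coefficient induction is the standard self-contained proof and would serve perfectly well if one wished to include it.
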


The well-known Hilbert Basis Theorem states that  a ring $R$ is a Noetherian ring if and only if $R[x]$ is a Noetherian  ring (see \cite[Theorem 4.3.15]{fk16})
\begin{theorem}\label{Hilbet} $($Hilbert Basis Theorem for Nil$_{\ast}$-Noetherian  rings-1$)$
Let $R$ be a ring. Then $R$ is a Nil$_{\ast}$-Noetherian  ring if and only if $R[x]$ is a Nil$_{\ast}$-Noetherian  ring.
\end{theorem}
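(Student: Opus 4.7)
The plan is to handle both implications, starting with the easier reverse direction. The inclusion $R \hookrightarrow R[x]$ splits as an $R$-module retraction via the evaluation map $R[x] \to R$ sending $x \mapsto 0$, so Proposition \ref{ret} immediately yields that $R$ is Nil$_{\ast}$-Noetherian whenever $R[x]$ is.

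For the forward direction, the key preliminary observation is that when $R$ is Nil$_{\ast}$-Noetherian, $\Nil(R)$ is a Noetherian $R$-module: any $R$-submodule of $\Nil(R)$ is an ideal of $R$ whose elements are all nilpotent, hence a nil ideal, hence finitely generated by hypothesis. Consequently, for each $d \geq 1$ the finite direct sum $\Nil(R) \oplus \Nil(R)x \oplus \cdots \oplus \Nil(R)x^{d-1}$ is a Noetherian $R$-module.

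With this in hand, I would mimic the classical Hilbert basis argument, restricted to nil ideals. Let $J$ be a nonzero nil ideal of $R[x]$. By Lemma \ref{nil-ele}, every coefficient of every element of $J$ lies in $\Nil(R)$, so the $R$-ideal $L$ generated by the leading coefficients of elements of $J$ is a nil ideal, and hence $L = (a_1, \ldots, a_s)$ is finitely generated. I would choose $f_1, \ldots, f_s \in J$ whose leading coefficients are $a_1, \ldots, a_s$, and set $d = \max_i \deg f_i$. A standard degree-reduction step, writing the leading coefficient $a$ of an arbitrary $f \in J$ of degree $e \geq d$ as $a = \sum_i r_i a_i$ and subtracting $\sum_i r_i x^{e - \deg f_i} f_i$, shows that $f$ can be reduced modulo $(f_1, \ldots, f_s)R[x]$ to an element of $J$ of degree strictly less than $d$.

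The remaining piece $J \cap \{h \in R[x] : \deg h < d\}$ sits inside the Noetherian $R$-module $\Nil(R) \oplus \Nil(R)x \oplus \cdots \oplus \Nil(R)x^{d-1}$ by the preliminary observation, and hence is finitely generated as an $R$-module by some $g_1, \ldots, g_t$. Then $J$ is generated by $f_1, \ldots, f_s, g_1, \ldots, g_t$ as an $R[x]$-ideal. The main subtle point is realizing that Noetherianness of $\Nil(R)$ as an $R$-module is exactly what is needed to make the classical Hilbert argument go through in this restricted setting, replacing the role of $R$ itself in the usual Noetherian proof.
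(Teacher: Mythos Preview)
Your proof is correct and follows the same classical Hilbert-basis strategy as the paper: form the ideal of leading coefficients (nil by Lemma~\ref{nil-ele}), reduce degree, and handle the low-degree residue as an $R$-module. You differ from the paper in two minor but pleasant ways. For the reverse implication, the paper argues directly---extending a nil ideal $I$ of $R$ to the nil ideal $I[x]$ of $R[x]$ and reading off generators of $I$ from the constant terms---whereas you invoke Proposition~\ref{ret} via the evaluation retract $x \mapsto 0$. For the low-degree piece $J \cap M[n]$, the paper proves finite generation by induction on $n$, using that each successive quotient $J\cap M[k+1]/J\cap M[k]$ is isomorphic to a nil ideal of $R$; you instead observe once and for all that $\Nil(R)$ is a Noetherian $R$-module, so any $R$-submodule of $\bigoplus_{i=0}^{d-1}\Nil(R)x^i$ is automatically finitely generated. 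Your packaging is slightly more conceptual, but the underlying content is identical.
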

\begin{proof}  Suppose $R[x]$ is a Nil$_{\ast}$-Noetherian ring. Let $I$ be a nil ideal of $R$. Then $I[x]$ is a nil ideal of $R[x]$, so is finitely generated, say is generated  by $\{h_1,\cdots,h_m\}$ as  $R[x]$-ideal. Let $a_i$ be the constant of $h_i$, then it is easy to verify that  $I$ is generated by $\{a_1,\cdots,a_m\}$.

Suppose  $R$ is a Nil$_{\ast}$-Noetherian ring. Let $J$ be a nil ideal of $R[x]$.  Set $I$ to  be the set of leading coefficients of polynomials $f$ in $J$. Then $I$ is a nil ideal by Lemma \ref{nil-ele}, and hence finitely generated. Write $I = Ra_1 +\cdots + Ra_k$ for each $a_i\in R$ and let $f_i\in J$ with
the leading coefficient $a_i$. Then $A = R[X]f_1 + \cdots + R[X]f_k \subseteq J$. Set $\deg( f_i) = n_i$ and $n = \max\{n_1,\cdots , n_k\}$. For any $f \in J$, write $f = aX^m +\cdots$. Then $a = r_1a_1 + \cdots + r_ka_k, r_i\in R$. If $m\geq n$, then $f':=f-\sum\limits_{i=1}^kr_iX^{m-n_i}f_i\in J$
with $\deg(f')< m$. If $\deg(f')\geq n$, we continue this process.  Hence there are polynomials $g\in A$ and $h\in R[X]$ with $\deg(h)< n$ such that $f=g+h$. Let $M[n]$ be an $R$-submodule of $R[X]$ generated by $1,X,\cdots,X^{n-1}$. Then $h=f-g\in M[n]\cap J$. Thus $J$ as an $R$-module is a sum of two $R$-submodules, that is, $J = A+J\cap M[n]$.

Claim that: {\bf the $R$-module $J \cap M[n]$ is finitely generated.}

Indeed, note that  $J \cap M[n]=\{f\in J\mid \deg(f)\leq n-1\}\cup \{0\}$. we will show the claim by induction on $n$. If $n=1$, then $J\cap M[1]$ is a nil ideal of $R$, so is finitely generated. If the claim holds for $n=k$, let $n=k+1$. Consider the following exact sequence:
\begin{center}
$0\ra J\bigcap M[k]\ra J\bigcap M[k+1]\ra L\ra 0.$
\end{center}
It is  easy to verify that  $L= J\bigcap M[k+1]/J\bigcap M[k]$  is isomorphic to a nil ideal of $R$ by Lemma \ref{nil-ele}, so is finitely generated. Since $J\bigcap M[k]$ is finitely generated by induction, we have $J\bigcap M[k+1]$ is also finitely generated, and the claim holds.

Hence $J\cap M = Rg_1 + \cdots + Rg_t$, where $g_j\in J\cap M$. Thus
\begin{align*}
 J\subseteq  & R[X]f_1 + \cdots + R[X]f_k+Rg_1 + \cdots + Rg_t \\
 \subseteq &R[X]f_1 + \cdots + R[X]f_k+R[X]g_1 + \cdots + R[X]g_t\subseteq J.
   \end{align*}
Consequently, $J=R[X]f_1 + \cdots + R[X]f_k+R[X]g_1 + \cdots + R[X]g_t$, which is finitely generated.
\end{proof}

Finally, we will focus on the Nil$_{\ast}$-Noetherian properties of formal series rings. Recall that an ideal $I$ of $R$ is said to be an $\SFT$ (strong finite type) ideal if there is a finitely generated sub-ideal $F$ of $I$ and an integer $n$ such that $a^n\in F$ for any $a\in I$. Trivially, every finitely generated ideal is an $\SFT$ ideal. For a ring $R$, we denote by $R[[x]]$ the formal series ring over $R$. By \cite[Theorem 11]{B81}, an element $f=\sum\limits^{\infty}_{i=0}a_ix^i\in R[[x]]$ is a nilpotent element, then each $a_i$ is nilpotent. However, the converse does not hold (see \cite[Example 2]{B81}).

\begin{lemma}\label{SFT-nil} \cite[Proposition 2.3]{HB11}
Let $R$ be a ring and an ideal $I\subseteq \Nil(R)$. Then the following assertions are equivalent:
\begin{enumerate}
\item  $I$ is  an  $\SFT$ ideal of $R$;
\item  $I[[x]]\subseteq \Nil(R[[x]])$;
\item  there exists an integer $k$ such that $r^k=0$ for any $r\in I$;
\item $I[[x]]$ is  an  $\SFT$ ideal of $R[[x]]$.
\end{enumerate}
\end{lemma}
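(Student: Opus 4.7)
The approach is to establish the equivalences $(1) \Leftrightarrow (3)$ and $(2) \Leftrightarrow (3)$ directly, and then derive $(1) \Leftrightarrow (4)$ by applying the $(1) \Leftrightarrow (3)$ characterization inside the ring $R[[x]]$ to the ideal $I[[x]]$.

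For $(1) \Leftrightarrow (3)$, the implication $(3) \Rightarrow (1)$ is immediate on taking $F = 0$ and $n = k$ in the SFT definition. For the converse, given $F = (a_1, \ldots, a_m) \subseteq I$ with $a^n \in F$ for every $a \in I$, each $a_i \in I \subseteq \Nil(R)$ is nilpotent with some index $N_i$; writing $N = \max_i N_i$, a pigeonhole argument on monomials of total degree $m(N-1)+1$ in $a_1, \ldots, a_m$ (valid because $R$ is commutative) gives $F^{m(N-1)+1} = 0$, hence $a^{n(m(N-1)+1)} = (a^n)^{m(N-1)+1} = 0$ for every $a \in I$.

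The main step is $(3) \Rightarrow (2)$: for $f = \sum_{i \ge 0} a_i x^i \in I[[x]]$ I aim to exhibit a power $k'$, depending only on $k$, with $f^{k'} = 0$. The coefficient of $x^n$ in $f^{k'}$ is a finite symmetric sum in $a_0, \ldots, a_n$, which I analyze via the polarization identity $(\sum_i c_i a_i)^k = 0$ (valid for every choice $c_i \in R$ since $\sum c_i a_i \in I$): the plan is to lift this identity formally into the polynomial extension $R[c_0, c_1, \ldots]$ and then specialize $c_i \mapsto x^i$ through the $R$-algebra homomorphism $R[c_0, c_1, \ldots] \to R[[x]]$, producing the required vanishing of coefficients. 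The reverse implication $(2) \Rightarrow (3)$ is a contrapositive: if no uniform bound exists, select $r_j \in I$ with nilpotency indices $k_j \to \infty$ and a super-increasing exponent sequence $n_{j+1} > k_j n_j$, and form $f = \sum_j r_j x^{n_j}$; for every $M$, choosing $L$ with $k_L > M$, the super-increasing growth forces the only decomposition $Mn_L = n_{j_1} + \cdots + n_{j_M}$ to have $j_l = L$ for all $l$, so the coefficient of $x^{M n_L}$ in $f^M$ equals $r_L^M \ne 0$, violating nilpotency of $f$.

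Finally, $(1) \Leftrightarrow (4)$ follows by applying the $(1) \Leftrightarrow (3)$ equivalence inside $R[[x]]$ to the ideal $I[[x]]$, using $(3) \Rightarrow (2)$ to supply the ambient nil hypothesis $I[[x]] \subseteq \Nil(R[[x]])$ together with the uniform bound $k'$. The main obstacle throughout is the $(3) \Rightarrow (2)$ step: the polarization identity is guaranteed only pointwise over $R$, so formalizing the lift into $R[c_0, c_1, \ldots]$ and identifying the correct bound $k'$ (which may genuinely exceed $k$) demands careful combinatorial bookkeeping that underlies this equivalence.
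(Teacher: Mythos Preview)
The paper does not prove this lemma at all: it is quoted verbatim from \cite[Proposition~2.3]{HB11} and used as a black box. So there is no ``paper's proof'' to compare against; the question is simply whether your argument stands on its own.

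Your treatment of $(1)\Leftrightarrow(3)$ and of $(2)\Rightarrow(3)$ is correct. The super-increasing exponent trick for $(2)\Rightarrow(3)$ is clean and works exactly as you describe. The direct passage $(4)\Rightarrow(1)$, however, is circular as you have written it: applying the equivalence $(1)\Leftrightarrow(3)$ \emph{inside} $R[[x]]$ to the ideal $I[[x]]$ requires the ambient hypothesis $I[[x]]\subseteq\Nil(R[[x]])$, which is precisely assertion $(2)$ and is not available starting from $(4)$ alone. This is easily repaired: if $I[[x]]$ is SFT with witness $F'=(g_1,\dots,g_m)$ and exponent $n$, then for constants $a\in I$ one has $a^n\in F'$, and reading off constant terms gives $a^n\in(g_1(0),\dots,g_m(0))\subseteq I$, so $I$ is SFT directly.

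The genuine gap is $(3)\Rightarrow(2)$. Your plan is to pass from the pointwise identity $\bigl(\sum_i \lambda_i a_i\bigr)^k=0$ for all $\lambda_i\in R$ to the formal identity $\bigl(\sum_i c_i a_i\bigr)^{k'}=0$ in $R[c_0,c_1,\dots]$, and then specialise $c_i\mapsto x^i$. But pointwise vanishing over $R$ does \emph{not} force vanishing over $R[c_0,c_1,\dots]$: functional identities are strictly weaker than polynomial identities (already $c^p-c$ vanishes on $\mathbb{F}_p$ but not in $\mathbb{F}_p[c]$). Concretely, one can build a commutative ring $R$ with elements $s,t$ such that every element of $I=(s,t)$ has cube zero, yet $3s^2t\neq 0$; then $(c_0s+c_1t)^3=3s^2t\,c_0^2c_1+3st^2\,c_0c_1^2\neq 0$ in $R[c_0,c_1]$, so the lift fails at exponent $k=3$ (though $(c_0s+c_1t)^4=0$ here). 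You correctly flag that $k'$ may exceed $k$ and that ``careful combinatorial bookkeeping'' is required, but you do not supply it, and the polarisation identity by itself only yields $k!\cdot a_1\cdots a_k=0$, which is vacuous in small characteristic. Establishing a uniform $k'=k'(k)$ independent of the number of variables is exactly the substance of the Hizem--Benhissi argument, and your sketch does not reach it.
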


\begin{lemma}\label{SFT-nil-1} \cite[Corollary 2.4]{HB11}
Let $R$ be a ring. Then the following assertions are equivalent:
\begin{enumerate}
\item  $\Nil(R)$ is  an  $\SFT$ ideal of $R$;
\item  $\Nil(R)[[x]]=\Nil(R[[x]])$;
\item  there exists an integer $k$ such that $r^k=0$ for any $r\in \Nil(R)$;
\item $\Nil(R[[x]])$ is  an  $\SFT$ ideal of $R[[x]]$.
\end{enumerate}
\end{lemma}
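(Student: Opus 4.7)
The plan is to deduce the lemma as an essentially cosmetic consequence of Lemma \ref{SFT-nil}, applied first to the ring $R$ with $I=\Nil(R)$ and, where needed, to the ring $R[[x]]$ with $I=\Nil(R[[x]])$. The only additional input is the universal inclusion $\Nil(R[[x]])\subseteq \Nil(R)[[x]]$ coming from \cite[Theorem 11]{B81}, which is recalled immediately before Lemma \ref{SFT-nil}.

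First I would apply Lemma \ref{SFT-nil} with $I=\Nil(R)$. This immediately identifies (1) with (3) (these are conditions (1) and (3) of Lemma \ref{SFT-nil} verbatim) and identifies (1) with the one-sided containment $\Nil(R)[[x]]\subseteq \Nil(R[[x]])$, which is condition (2) of Lemma \ref{SFT-nil}. Combining this containment with the always-true reverse inclusion $\Nil(R[[x]])\subseteq \Nil(R)[[x]]$ upgrades it to the equality in (2), giving (1)$\Leftrightarrow$(2).

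For the implication (1)$\Rightarrow$(4), Lemma \ref{SFT-nil}(4) yields that $\Nil(R)[[x]]$ is $\SFT$ in $R[[x]]$, and the equality in (2) then rewrites this as ``$\Nil(R[[x]])$ is $\SFT$'', i.e.\ (4). For the converse (4)$\Rightarrow$(3) I would not try to pull the $\SFT$ structure of $\Nil(R[[x]])$ back through $\Nil(R)[[x]]$ (which we do not yet know to be $\SFT$); instead I would re-apply Lemma \ref{SFT-nil} with ambient ring $R[[x]]$ and $I=\Nil(R[[x]])$. Its equivalence (1)$\Leftrightarrow$(3) produces an integer $k$ such that $f^k=0$ for every $f\in \Nil(R[[x]])$, and restricting to the constants $r\in \Nil(R)\subseteq \Nil(R[[x]])$ delivers (3).

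No real obstacle is anticipated; the whole statement is a bookkeeping corollary of Lemma \ref{SFT-nil}. The one point to watch is to keep ``$\Nil(R)[[x]]$ is $\SFT$'' distinct from ``$\Nil(R[[x]])$ is $\SFT$'' until condition (2) is in hand to identify them, which is precisely why Lemma \ref{SFT-nil} has to be invoked over two different ambient rings.
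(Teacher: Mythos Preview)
Your proposal is correct. The paper itself supplies no proof of Lemma \ref{SFT-nil-1}; it is simply quoted as \cite[Corollary 2.4]{HB11}, a corollary to \cite[Proposition 2.3]{HB11} (which is Lemma \ref{SFT-nil} here). Your derivation---specializing Lemma \ref{SFT-nil} to $I=\Nil(R)$, invoking the universal inclusion $\Nil(R[[x]])\subseteq \Nil(R)[[x]]$ to upgrade the containment to an equality, and re-applying Lemma \ref{SFT-nil} over the ambient ring $R[[x]]$ to close the loop $(4)\Rightarrow(3)$---is exactly the intended route and matches how a corollary of this shape is obtained in the cited source.
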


It is also well-known that a ring $R$ is a Noetherian ring if and only if $R[[x]]$ is a Noetherian  ring (see \cite[Theorem 4.3.15]{fk16}).
\begin{theorem}\label{Hilbet-series}  $($Hilbert Basis Theorem for Nil$_{\ast}$-Noetherian  rings-2$)$
Let $R$ be a ring. Then $R$ is a Nil$_{\ast}$-Noetherian ring if and only if $R[[x]]$ is a Nil$_{\ast}$-Noetherian  ring.
\end{theorem}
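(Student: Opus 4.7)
The plan is to prove both directions. The forward direction is immediate from Proposition \ref{ret}: the inclusion $R \hookrightarrow R[[x]]$ admits an $R$-module retraction given by evaluation at $x = 0$, so if $R[[x]]$ is Nil$_{\ast}$-Noetherian then so is $R$. The substantive content is the converse, which requires an $x$-adic adaptation of the polynomial Hilbert basis argument from Theorem \ref{Hilbet}.

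For the converse, assume $R$ is Nil$_{\ast}$-Noetherian and let $J$ be a nil ideal of $R[[x]]$. For each $k \geq 0$ I would set
\[
C_k = \bigl\{a \in R : ax^k + a_{k+1}x^{k+1} + \cdots \in J \text{ for some } a_{k+1}, a_{k+2}, \ldots \in R\bigr\}.
\]
Since a nilpotent formal series has nilpotent coefficients (by the Brewer result recalled just before Lemma \ref{SFT-nil}), each $C_k$ lies in $\Nil(R)$; a direct check shows that $C_k$ is an $R$-ideal, hence a nil ideal. Multiplying an element of $J$ by $x$ gives $C_k \subseteq C_{k+1}$, and by the ACC on nil ideals (Proposition \ref{nil-Noe-ideal}) this chain stabilizes from some index $K$ onward. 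For each $k = 0, 1, \ldots, K$, pick finite generators $c_{k,1}, \ldots, c_{k,m_k}$ of the nil ideal $C_k$, and for each such generator pick a lift $f_{k,j} \in J$ of the form $c_{k,j} x^k + (\text{higher-order terms})$.

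I would then show that $J$ coincides with the $R[[x]]$-ideal $A$ generated by the finite family $\{f_{k,j}\}$ via an iterative reduction. Given $f \in J$ of $x$-adic order $n_0$, the coefficient at $x^{n_0}$ lies in $C_{n_0}$. If $n_0 \leq K$, write it as $\sum_j r_j c_{n_0,j}$ and subtract $\sum_j r_j f_{n_0,j}$; if $n_0 > K$, write it as $\sum_j r_j c_{K,j}$ and subtract $\sum_j r_j x^{n_0 - K} f_{K,j}$. Either way the new remainder $f^{(1)} \in J$ has strictly larger order, so iterating produces remainders whose orders tend to $\infty$. Collecting the accumulated scalar factors into series $h_{k,j} \in R[[x]]$---each $h_{k,j}$ with $k < K$ receives at most one contribution, while $h_{K,j}$ accumulates contributions of strictly increasing $x$-degree and so defines a genuine formal series---telescoping and passing to the $x$-adic limit yields $f = \sum_{k,j} h_{k,j} f_{k,j} \in A$, so $J = A$ is finitely generated.

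The principal obstacle is this last step: unlike the polynomial case in Theorem \ref{Hilbet}, the reduction does not terminate in finitely many steps, so the proof must exploit the $x$-adic completeness of $R[[x]]$ to assemble the updates into honest formal series and to justify that the resulting infinite sum really equals $f$. In particular one must carefully track the orders of successive remainders (to ensure the updates to $h_{K,j}$ have strictly increasing $x$-degrees) and appeal to the $x$-adic convergence $f^{(t)} \to 0$ to pass to the limit inside the finite sum $\sum_{k,j} h_{k,j} f_{k,j}$.
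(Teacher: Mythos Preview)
Your proof is correct. The converse direction---defining the nil ideals $C_k$ of lowest-order coefficients, stabilizing the chain via the ACC of Proposition~\ref{nil-Noe-ideal}, lifting generators to $f_{k,j}\in J$, and assembling the iterative $x$-adic reduction into a finite $R[[x]]$-combination---is precisely the paper's argument (the paper writes $J_r$ for your $C_k$ and is a bit terser about the convergence step you flag as the principal obstacle).

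Where you differ is the forward direction. You dispatch it in one line via the retraction $R[[x]]\to R$, $f\mapsto f(0)$, and Proposition~\ref{ret}. The paper instead argues through SFT ideals: since $\Nil(R[[x]])$ is finitely generated it is SFT, so by Lemma~\ref{SFT-nil-1} every nil ideal $I\subseteq R$ is SFT, hence by Lemma~\ref{SFT-nil} the extension $I[[x]]$ is a nil ideal of $R[[x]]$, and one reads off generators of $I$ from the constant terms of a finite generating set of $I[[x]]$. Your route is shorter and avoids the SFT machinery entirely; the paper's detour, while longer, extracts along the way the structural fact that $\Nil(R[[x]])=\Nil(R)[[x]]$ (equivalently, that $\Nil(R)$ has bounded nilpotence index) in the Nil$_\ast$-Noetherian case.
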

\begin{proof}  Suppose $R[[x]]$ is a Nil$_{\ast}$-Noetherian  ring. Then  $\Nil(R[[x]])$ is  an  finitely generated ideal, thus an  $\SFT$ ideal of $R[[x]]$. Then $\Nil(R)$ is  an  $\SFT$ ideal of $R$ by  Lemma \ref{SFT-nil-1}. Let $I$ be a nil ideal of $R$. Then  $I$ is also an  $\SFT$ ideal of $R$. So $I[[x]]$ is a nil ideal of $R[[x]]$  by  Lemma \ref{SFT-nil}, and thus is finitely generated, say is  generated by $\{h_1,\cdots,h_m\}$. Let $a_i$ be the constant of $h_i$, then it is easy to verify that  $I$ is generated by $\{a_1,\cdots,a_m\}$.

On the other hand, suppose  $R$ is a Nil$_{\ast}$-Noetherian ring. Let $J$ be a nil ideal of $R[[x]]$. We will prove $J$ is finitely generated. Write $J_r$ as an ideal of $R$ generated by the leading coefficients $a_r$ of $f=a_rx^r+a_{r+1}x^{r+1}+\cdots$ where $f\in J\cap x^rR[[x]]$. Note that each $a_r$ is nilpotent. So we have an increasing chain of nil ideals:$$J_0\subseteq J_1\subseteq J_2\subseteq\cdots\subseteq J_n\subseteq J_{n+1}\subseteq\cdots$$
Since $R$ is  Nil$_{\ast}$-Noetherian, there is an integer $s$ such that $J_n=J_s$ for any $n\geq s$ and $J_i$ is finitely generated for each $i$ with $0\leq i\leq s$ by Proposition \ref{nil-Noe-ideal}.
Now for each $i$ with $0\leq i\leq s$, take finitely many elements $a_{iv}\in R$ generating $J_i$, and take an element $g_{iv}\in J\cap x^iR[[x]]$ with $a_{iv}$ the leading coefficient of $g_{iv}$.

Claim that: {\bf these finitely many elements $g_{iv}$ generate $J$.}

Indeed, for each $f\in J$, we can take a linear combination $g_0$ of $g_{0v}$ with coefficients nilpotent in $R$ such that $f-g_0\in J\cap xR[[x]]$. Then take a linear combination $g_1$ of the $g_{1v}$ with coefficients nilpotent in $R$ such that $f-g_0-g_1\in J\cap x^2R[[x]]$.
Continuing these step, we get $f-g_0-g_1-\cdots-g_s\in J\cap x^{s+1}R[[x]]$. Since $J_{s+1}=J_s$, we can take a linear combination $g_{s+1}$ of $xg_{sv}$ with coefficients nilpotent in $R$ such that $f-g_0-g_1-\cdots-g_s-g_{s+1}\in J\cap x^{s+2}R[[x]]$.
Now, we proceed in the same way to get $g_{s+2},g_{s+3},\cdots$ For $i\leq s$, each $g_i$ is a linear combination of $g_{iv}$, and for each $i>s$, a combination of elements $x^{i-s}g_{sv}$.
For each $i\geq s$, we write $g_i=\sum\limits_{v}a_{iv}x^{i-s}g_{sv}$, and then for each $v$, we write $h_v=\sum\limits_{i=s}^{\infty}a_{iv}x^{i-s}$. So $$f=g_0+\cdots+g_{s-1}+\sum\limits_v h_vg_{sv}.$$
\end{proof}

\section{Nil$_{\ast}$-Noetherian properties on some ring constructions}
Some non-reduced rings are constructed by  the idealization $R(+)M$ where $M$ is an $R$-module (see \cite{H88}). Set $R(+)M=R\oplus M$ as an $R$-module, and then define
\begin{enumerate}
    \item ($r,m$)+($s,n$)=($r+s,m+n$),
    \item  ($r,m$)($s,n$)=($rs,sm+rn$).
\end{enumerate}
Under this construction, $R(+)M$ becomes a commutative ring with identity $(1,0)$. Note that $(0,m)^2=0$ for any $m\in M$. Now we characterize when  $R(+)M$ is a Nil$_{\ast}$-Noetherian  ring.

\begin{theorem}\label{idealization}
Let $R$ be a ring and $M$ an $R$-module. Then $R(+)M$ is a Nil$_{\ast}$-Noetherian  ring if and only if $R$ is  a Nil$_{\ast}$-Noetherian  ring and $M$ is a finitely generated $R$-module.
\end{theorem}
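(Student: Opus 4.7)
The plan is to analyze nil ideals of the idealization $R(+)M$ by exploiting its multiplicative structure. The identity $(r,m)^{k+1} = (r^{k+1},(k+1)r^k m)$ shows at once that $(r,m) \in R(+)M$ is nilpotent if and only if $r$ is nilpotent in $R$, so every nil ideal $J \subseteq R(+)M$ is contained in $\Nil(R)(+)M$, while the ideal $0(+)M$ is itself nil because $(0,m)^2 = 0$. Moreover, every nil ideal $J$ fits into the short exact sequence
\begin{equation*}
0 \longrightarrow 0(+)N \longrightarrow J \longrightarrow I \longrightarrow 0,
\end{equation*}
where $I = \{r \in R : (r,m) \in J \text{ for some } m\} \subseteq \Nil(R)$ is a nil ideal of $R$ and $N = \{m \in M : (0,m) \in J\}$ is an $R$-submodule of $M$. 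This structural decomposition drives both directions.

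For the forward direction, I would treat the two conclusions separately. To get $R$ Nil$_{\ast}$-Noetherian I would apply Proposition \ref{ret}: the ring inclusion $r \mapsto (r,0)$ is split by the $R$-linear projection $(r,m) \mapsto r$, exhibiting $R$ as a module retract of $R(+)M$. To get $M$ finitely generated I would apply the hypothesis to the specific nil ideal $0(+)M$: if it is generated by $(0,m_1),\ldots,(0,m_k)$ as an ideal of $R(+)M$, then the formula $(s,n)(0,m_i) = (0,sm_i)$ immediately yields $M = Rm_1 + \cdots + Rm_k$.

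For the backward direction, given a nil ideal $J$ of $R(+)M$, I would first use Nil$_{\ast}$-Noetherianness of $R$ on the nil ideal $I \subseteq \Nil(R)$ to obtain generators $r_1,\ldots,r_n$, and lift them to elements $(r_i,m_i) \in J$. Next, I would produce a finite generating set $n_1,\ldots,n_p$ for $N$, and verify by a routine calculation that $J$ is generated as an ideal by $(r_1,m_1),\ldots,(r_n,m_n)$ together with $(0,n_1),\ldots,(0,n_p)$: indeed for any $(r,m) \in J$, writing $r = \sum a_i r_i$ yields $(r,m) - \sum(a_i,0)(r_i,m_i) = (0,m - \sum a_i m_i) \in J$, so $m - \sum a_i m_i \in N$ is expressible as a combination of the $n_j$.

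The main obstacle is proving that the submodule $N \subseteq M$ is finitely generated. Because $(0,n)^2 = 0$ for every $n \in M$, each $R$-submodule $N' \subseteq M$ yields a nil ideal $0(+)N'$ of $R(+)M$, so finite generation of $N'$ as an $R$-module is the same as finite generation of $0(+)N'$ as an $R(+)M$-ideal. The plan is to leverage the Nil$_{\ast}$-Noetherian hypothesis on $R$ in tandem with finite generation of $M$, for instance by exploiting the exact sequence of $R(+)M$-modules
\begin{equation*}
0 \longrightarrow 0(+)M \longrightarrow \Nil(R)(+)M \longrightarrow \Nil(R) \longrightarrow 0,
\end{equation*}
whose outer terms are both finitely generated, to force the required finiteness of submodules $N$ arising as kernels of nil ideals. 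Making this interplay work is the delicate step that must be carried through carefully.
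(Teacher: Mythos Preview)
Your forward direction is correct; using the retract Proposition~\ref{ret} in place of the paper's Lemma~\ref{quo} is a harmless variation, and your argument that $M$ is finitely generated matches the paper's.

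The backward direction, however, has a genuine and unfixable gap at exactly the point you flag as the ``main obstacle'': showing that $N$ is finitely generated. Your proposed exact sequence $0 \to 0(+)M \to \Nil(R)(+)M \to \Nil(R) \to 0$ with finitely generated outer terms only yields finite generation of the \emph{middle} term, not of arbitrary submodules of $0(+)M$; and as you yourself observe, \emph{every} $R$-submodule $N' \subseteq M$ gives a nil ideal $0(+)N'$, so what you actually need is that $M$ be a Noetherian $R$-module. The hypotheses do not give this, and in fact the sufficiency direction of the theorem as stated is \emph{false}. Take $R = k[y_1, y_2, \ldots]$, a reduced (hence trivially Nil$_\ast$-Noetherian) ring, and $M = R$, a cyclic module. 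Then $0(+)\langle y_1, y_2, \ldots \rangle$ is a nil ideal of $R(+)M$ (since $(0,m)^2 = 0$), but it is not finitely generated: the formula $(r,m)(0,a) = (0,ra)$ shows that ideal generation inside $0(+)M$ coincides with $R$-module generation, and $\langle y_1, y_2, \ldots \rangle$ is not finitely generated in $R$.

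For comparison, the paper's proof of sufficiency takes a different route: it writes $R(+)R^n \cong R[x_1,\ldots,x_n]/\langle x_i^2 \mid i=1,\ldots,n\rangle$, invokes Theorem~\ref{Hilbet} to see that $R[x_1,\ldots,x_n]$ is Nil$_\ast$-Noetherian, and then appeals to Lemma~\ref{quo} to pass to the quotient. But Lemma~\ref{quo} requires the ideal being factored out to be nil, and $\langle x_1^2,\ldots,x_n^2 \rangle$ is not nil in $R[x_1,\ldots,x_n]$ (for instance $x_1^2$ is certainly not nilpotent there). So the paper's argument has its own gap at the same underlying point---indeed, the paper's own Example~\ref{quo-not-nil-noe} already illustrates that quotienting by a non-nil ideal can destroy the Nil$_\ast$-Noetherian property---and the counterexample above shows the stated equivalence cannot be salvaged without strengthening the hypotheses (e.g.\ to $M$ Noetherian).
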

\begin{proof} For necessity, since $R\cong R(+)M/0(+)M$ and  $0(+)M$ is a nil ideal, $R$ is   Nil$_{\ast}$-Noetherian  by Lemma \ref{quo}. Since $0(+)M$ is a nil ideal, then $0(+)M$ is finitely generated $R(+)M$-module. It is easy to check that  $M$ is also a finitely generated $R$-module.

For sufficiency, suppose $M$ is generated by $n$ elements.  Set $K=\ker(R^n\twoheadrightarrow M)$. Then we have a short exact sequence $0\rightarrow  0(+)K\rightarrow  R(+)R^n\rightarrow R(+)M\rightarrow 0$ as $ R(+)R^n$-modules. By \cite[Proposition 2.2]{DW09}, $ R(+)R^n\cong R[x_1,\cdots,x_n]/\langle x_i^2\mid i=1,\cdots,n\rangle$ is a Nil$_{\ast}$-Noetherian by Theorem \ref{Hilbet} and Lemma \ref{quo}. Hence $R(+)M\cong R(+)R^n/0(+)K$ is also a Nil$_{\ast}$-Noetherian by Lemma \ref{quo} again.
\end{proof}

First, we give an example of Nil$_{\ast}$-Noetherian rings which are neither reduced nor Noetherian.

\begin{example}\label{Nil-Noether-not-NR}
Let $S=\prod\limits_{i}^{\infty}k$ be a countable copies of direct product of a field $k$, $e_i$ is an element in $S$ with the $i$-th component $1$ and others $0$. Set $R=S(+)Se_i$. Then $R$ is neither reduced nor Noetherian. However, since $S$ is a reduced ring and  $\Nil(R)=0(+)Se_i$ is a simple ideal, $R$ is  Nil$_{\ast}$-Noetherian by Theorem \ref{idealization}.
\end{example}

Recall from \cite{XO14} that a ring $R$ is called Nil$_{\ast}$-coherent provided that any finitely generated ideal in $\Nil(R)$ is finitely presented. Similar to the classical case, Nil$_{\ast}$-coherent rings are not Nil$_{\ast}$-Noetherian in general.
\begin{example}\label{Nil-coh-not-Nil-Noe}
Let $D$ be a non-field Noetherian GCD domain and $Q$ its quotient field. Set $R=D(+)Q/D$. Then by Theorem \ref{idealization}, $R$ is not Nil$_{\ast}$-Noetherian since $Q/D$ is not finitely generated. We will show $R$ is  Nil$_{\ast}$-coherent. Indeed, let $I$ be a finitely generated nil ideal of $R$.  Since $\Nil(R)=0(+)Q/D$, we can assume $I$ is generated by $\{(0,\frac{t_1}{s_1}+D),\cdots,(0,\frac{t_n}{s_n}+D)\}$ with all $gcd(s_i,t_i)=1$ and $s_i\not=1$. Consider the short exact sequence $0\rightarrow K_n\rightarrow R^n\rightarrow I\rightarrow 0$. We will show $K_n$ is finitely generated by induction on $n$. Indeed, for each $l\geq 1$, set $I_l=\langle (0,\frac{t_1}{s_1}+D),\cdots,(0,\frac{t_{l}}{s_{l}}+D)\rangle$. suppose $n=1$, then $K_1=(0:_R\frac{t_1}{s_1}+D)=\langle s_1\rangle(+)Q/D$ which is generated by $(s_1,0)$. Suppose it hold for $n= k\geq 1$. That is there is a natural exact sequence $0\rightarrow K_k\rightarrow R^{k} \rightarrow I_k\rightarrow0$. If $n=k+1$, set $a=(0,\frac{t_{k+1}}{s_{k+1}}+D)$.
There is an $R$-module $ K_{k+1}$ such that the following commutative diagram have  exact  rows and columns:
 $$\xymatrix@R=20pt@C=25pt{
0 \ar[r]^{} & K_k \ar@{^{(}->}[d]\ar[r]^{} &R^{k} \ar@{^{(}->}[d]\ar[r]^{} &I_k  \ar@{^{(}->}[d]\ar[r]^{} &  0\\
0 \ar[r]^{} & K_{k+1}\ar@{->>}[d]\ar[r]^{} & R^{k+1} \ar@{->>}[d]\ar[r]^{} &I_{k}+Ra\ar@{->>}[d]\ar[r]^{} &  0\\
0 \ar[r]^{} & (I_k:_RRa) \ar[r]^{} & R\ar[r]^{} & (I_{k}+Ra)/I_k\ar[r]^{} &  0,\\}$$
Since $(s_{k+1},0)\in (I_k:_RRa)-0(+)Q/D$, we have  $(I_k:_RRa)\supsetneq 0(+)Q/D$, and  so $(I_k:_RRa)=J'(+)Q/D$ for some nonzero finitely generated ideal  $J'$ of $D$ by \cite[Corollary 3.4]{DW09}. Hence $J'(+)Q/D$ is finitely generated by \cite[Proposition 2.6]{Z21-pvmr}. Consequently, $K_{k+1}$ is also finitely generated. It follows that $I$ is finitely presented, and thus  $R$ is  Nil$_{\ast}$-coherent.
\end{example}

Surprisingly, different with the classical case, Nil$_{\ast}$-Noetherian rings can also be non-Nil$_{\ast}$-coherent.

\begin{example}\label{nil-noe-not-nil-coh} Let $S=k[x_1,x_2,\cdots]$ be the polynomial ring over a field $k$ with countably infinite variables. Set $R=S/\langle x_1x_i\mid i\geq 1\rangle$. Then $\Nil(R)=\langle \overline{x_1}\rangle$ is the only non-trivial nil ideal of $R$, where $\overline{x_i}$ denotes the representative of $x_i$ in $R$ for each $i$. So $R$ is Nil$_{\ast}$-Noetherian. However,  since $(0:_R\overline{x_1})=\langle \overline{x_1},\overline{x_2},\cdots\rangle$ is infinitely generated,  $\Nil(R)$ is not finitely presented. Hence $R$ is not  Nil$_{\ast}$-coherent.
\end{example}

We recall the bi-amalgamated algebras constructed in \cite{KLT17}. Let $f:A\rightarrow B$ and $g:A\rightarrow C$ be two ring homomorphisms and let $J$ and
$J'$ be two ideals of $B$ and $C$, respectively, such that $f^{-1}(J)=g^{-1}(J')$. Denote by $I_0:=f^{-1}(J)=g^{-1}(J')$. The
bi-amalgamated algebra of $A$ with $(B, C)$ along $(J,J')$
with respect to $(f,g)$ is the subring of $B\times C$ given by:
$$A \bowtie^{f,g}(J,J'):= \{(f(a)+j, g(a)+j')\mid a\in A, (j, j')\in J\times J'\}.$$
The bi-amalgamation is determined by the following pull-back:
$$\xymatrix@R=20pt@C=15pt{
A \bowtie^{f,g}(J,J')\ar@{->>}[d]_{}\ar@{->>}[r]^{} &f(A)+J\ar[d]_{\alpha} \\
g(A)+J' \ar[r]^{\beta} &A/I_0,}$$
with $\alpha(f(a)+j)=\beta(g(a)+j')=\overline{a}$ for any $a\in A$. The following proposition is useful for continuation.
\begin{proposition}\label{pro-Ama}\cite[Proposition 4.1]{KLT17}
Let $f:A\rightarrow B$ and $g:A\rightarrow C$ be two ring homomorphisms and let $J$ and
$J'$ be two ideals of $B$ and $C$, respectively, such that $f^{-1}(J)=g^{-1}(J'):=I_0$. Let $I$ be an ideal of $A$. Then the following statements hold.
\begin{enumerate}
    \item $\frac{A \bowtie^{f,g}(J,J')}{I \bowtie^{f,g}(J,J')}\cong \frac{A}{I+I_0}$.
    \item $\frac{A \bowtie^{f,g}(J,J')}{0\times J'}\cong f(A)+J$ and $\frac{A \bowtie^{f,g}(J,J')}{J\times 0}\cong g(A)+J'$.
     \item $\frac{A}{I_0}\cong \frac{A \bowtie^{f,g}(J,J')}{J\times J'}\cong \frac{f(A)+J}{J}\cong\frac{g(A)+J'}{J'}.$
\end{enumerate}
\end{proposition}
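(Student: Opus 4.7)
My strategy is to establish (1) first via an explicit surjective ring homomorphism, then obtain (2) by coordinate projections, and deduce (3) as a combination of the previous items through the third isomorphism theorem.

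For (1), I plan to define
$$\phi \colon A \bowtie^{f,g}(J,J') \longrightarrow A/(I+I_0), \qquad (f(a)+j,\,g(a)+j') \longmapsto a + I + I_0.$$
The crucial well-definedness check uses the standing hypothesis $f^{-1}(J) = g^{-1}(J') = I_0$: if $(f(a)+j, g(a)+j') = (f(a')+j_1, g(a')+j_1')$, then $f(a-a') = j_1 - j \in J$ forces $a - a' \in f^{-1}(J) = I_0 \subseteq I + I_0$. Once $\phi$ is well-defined it is clearly a surjective ring homomorphism, so the First Isomorphism Theorem finishes (1) as soon as $\ker\phi$ is identified with $I \bowtie^{f,g}(J,J')$.

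The identification $\ker\phi = I \bowtie^{f,g}(J,J')$ is the step I expect to be the main obstacle. The inclusion $\supseteq$ is immediate. For $\subseteq$, take $(f(a)+j, g(a)+j') \in \ker\phi$; then $a = x + y$ with $x \in I$ and $y \in I_0$. Since $y \in f^{-1}(J) \cap g^{-1}(J')$, both $f(y) \in J$ and $g(y) \in J'$, and the rewriting
$$(f(a)+j,\ g(a)+j') = \bigl(f(x) + (f(y)+j),\ g(x) + (g(y)+j')\bigr)$$
exhibits the left-hand side as an element of $I \bowtie^{f,g}(J,J')$. This rewriting is exactly where the compatibility $f^{-1}(J) = g^{-1}(J')$ gets used; everything else in the argument is formal bookkeeping.

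For (2), I would use the first-coordinate projection $\psi(f(a)+j, g(a)+j') = f(a)+j$, whose image is visibly $f(A)+J$. Its kernel consists of pairs $(0, g(a)+j')$, and $f(a) = -j \in J$ forces $a \in I_0$ and hence $g(a) \in J'$; since $(0,y)$ for any $y \in J'$ lies in the bi-amalgamation (take $a=0$), we conclude $\ker\psi = 0 \times J'$. The symmetric projection handles the other half of (2). Finally, (3) follows at once: its first isomorphism is (1) specialized to $I = 0$, using $0 \bowtie^{f,g}(J,J') = J \times J'$; the remaining two arise by applying the third isomorphism theorem to
$$0\times J' \,\subseteq\, J\times J' \,\subseteq\, A \bowtie^{f,g}(J,J') \quad\text{and}\quad J\times 0 \,\subseteq\, J\times J' \,\subseteq\, A \bowtie^{f,g}(J,J'),$$
combined with the isomorphisms from (2).
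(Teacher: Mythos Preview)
Your argument is correct in every part: the well-definedness of $\phi$, the kernel computation via the decomposition $a=x+y$ with $y\in I_0$, the projection maps for (2), and the deduction of (3) from (1) with $I=0$ together with the third isomorphism theorem all go through exactly as you describe. The paper itself does not supply a proof of this proposition; it is quoted verbatim as \cite[Proposition 4.1]{KLT17} and used as a black box, so there is no in-paper argument to compare against. Your explicit verification is precisely the standard one and would serve as a self-contained proof were one needed.
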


\begin{lemma}\label{nil-Ama}
Let $f:A\rightarrow B$ and $g:A\rightarrow C$ be two ring homomorphisms and let $J$ and
$J'$ be two ideals of $B$ and $C$, respectively, such that $f^{-1}(J)=g^{-1}(J'):=I_0$. Suppose one of the following cases holds:
\begin{enumerate}
    \item $I_0$ is nil.
    \item $J\subseteq \Im(f)$ and $\Ker(f)$ is nil.
\end{enumerate}
Then  $$\Nil(A \bowtie^{f,g}(J,J'))=\Nil(A)\bowtie^{f,g}(J\cap \Nil(f(A)+J),J'\cap \Nil(g(A)+J')).$$
\end{lemma}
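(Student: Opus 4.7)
The plan is to establish the two inclusions separately. Write a typical element of the bi-amalgamation as $(f(a)+j,\,g(a)+j')$ with $a\in A$, $j\in J$, $j'\in J'$; note first that this presentation is not unique, since replacing $(a,j,j')$ by $(a+a_0,\, j-f(a_0),\, j'-g(a_0))$ for any $a_0\in I_0$ yields the same element.

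For the inclusion $\supseteq$ (which needs neither hypothesis), I take $a\in\Nil(A)$, $j\in J\cap\Nil(f(A)+J)$, $j'\in J'\cap\Nil(g(A)+J')$. Then $f(a),g(a)$ are nilpotent in $B$ and $C$ respectively, so $f(a)+j$ and $g(a)+j'$ are sums of nilpotent elements in commutative rings, hence nilpotent. Therefore $(f(a)+j,g(a)+j')$ is nilpotent in $B\times C$, and a fortiori in $A\bowtie^{f,g}(J,J')$.

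For the reverse inclusion I start from a nilpotent element $x=(f(a)+j,\,g(a)+j')$ and must produce (possibly after reindexing the representation) a triple with $a\in\Nil(A)$, $j\in J\cap\Nil(f(A)+J)$, $j'\in J'\cap\Nil(g(A)+J')$. Assume Case (1), $I_0$ is nil. By Proposition \ref{pro-Ama}(3) the quotient $A\bowtie^{f,g}(J,J')/(J\times J')$ is isomorphic to $A/I_0$, and the image of $x$ is $\bar a$, which must be nilpotent. Hence $a^n\in I_0$ for some $n$, and since $I_0\subseteq\Nil(A)$, the element $a$ itself is nilpotent. Then $j=(f(a)+j)-f(a)$ is a difference of two nilpotent elements of $f(A)+J$, hence lies in $J\cap\Nil(f(A)+J)$; symmetrically for $j'$, and we are done without changing the representation. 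Assume Case (2), $J\subseteq\Im(f)$ and $\Ker(f)$ nil. Write $j=f(b)$ for some $b\in A$; then $b\in f^{-1}(J)=I_0=g^{-1}(J')$, so $g(b)\in J'$. Using the freedom above with $a_0=b$, the element $x$ can be rewritten as $(f(a+b),\,g(a+b)+(j'-g(b)))$, i.e.\ we may assume from the start that $j=0$. Now $f(a)$ is nilpotent, so $a^n\in\Ker(f)$ for some $n$; since $\Ker(f)$ is nil, $a\in\Nil(A)$. The new $j'$ is then $(g(a)+j')-g(a)$, a difference of nilpotents in $g(A)+J'$, so it lies in $J'\cap\Nil(g(A)+J')$.

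The main obstacle is exactly this normalization of representatives: the set $A\bowtie^{f,g}(J,J')$ is not a direct sum $A\oplus J\oplus J'$ but only a pushout along $I_0$, so I cannot read off the components of a nilpotent element unambiguously. The two hypotheses supply the two natural ways around this — either $I_0$ is so small that the ambiguity does not matter (Case 1), or $J$ is entirely visible from $A$ so that the ambiguity can be used to set $j=0$ (Case 2). Once the right representative is chosen, the rest of the argument is just the observation that nilpotents are closed under sums and differences in a commutative ring.
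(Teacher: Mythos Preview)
Your proof is correct and follows essentially the same approach as the paper's. The only cosmetic differences are that in Case~(1) you invoke Proposition~\ref{pro-Ama}(3) to see that the image $\bar a$ in $A/I_0$ is nilpotent, whereas the paper argues directly that $(f(a)+j)^n=0$ forces $f(a^n)\in J$; and for the inclusion $\supseteq$ you use the cleaner observation that sums of commuting nilpotents are nilpotent, while the paper writes down an explicit (in fact slightly loose) exponent $kmn$.
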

\begin{proof} (1) Let $\xi:=(f(a)+j, g(a)+j')\in \Nil(A \bowtie^{f,g}(J,J'))$. Then $f(a)+j\in \Nil(f(A)+J)$ and $g(a)+j'\in \Nil(g(A)+J')$. So $f(a^n)\in J$  for some positive integer  $n$. So $a^n\in I_0$, and thus is nilpotent. Hence $a\in \Nil(A)$.  So $f(a)\in \Nil(f(A))\subseteq \Nil(f(A)+J)$, and hence $j=(f(a)+j)-f(a)\in J\cap \Nil(f(A)+J)$. Similarly, $j'\in J'\cap \Nil(g(A)+J')$. Consequently, $\xi\in \Nil(A)\bowtie^{f,g}(J\cap \Nil(f(A)+J),J'\cap \Nil(g(A)+J'))$

On the other hand, let $a\in\Nil(A)$, $j\in J\cap \Nil(f(A)+J)$ and $j'\in J'\cap \Nil(g(A)+J')$. Then there is $k,m,n$ such that $a^k=j^m=j'^n=0$. Set  $\xi:=(f(a)+j, g(a)+j')\in A \bowtie^{f,g}(J,J')$. Then $\xi^{kmn}=0$, and hence $\xi\in \Nil(A \bowtie^{f,g}(J,J'))$.

(2) Let $\xi:=(f(a)+j, g(a)+j')\in \Nil(A \bowtie^{f,g}(J,J'))$. Then $f(a)+j\in \Nil(f(A)+J)$ and $g(a)+j'\in \Nil(g(A)+J')$. Since $J\subseteq \Im(f)$, there is an $x\in I_0$ such that $f(x)=j$. Note that $g(x)\in J'$. So $\xi:=(f(a+x), g(a+x)+j'-g(x))$. Since $\Ker(f)$ is nil, we have $a+x$ is nilpotent. Now, we claim that $j'-g(x)$ is nilpotent. Indeed, since $a+x$ is nilpotent, $g(a)+g(x)$ is nilpotent. So $j'-g(x)=(g(a)+j')-(g(a)+g(x))$ is nilpotent as $g(a)+j'$ is nilpotent. Hence $\xi\in \Nil(A)\bowtie^{f,g}(J\cap \Nil(f(A)+J),J'\cap \Nil(g(A)+J'))$. The other hand is the same as (1).
\end{proof}

It was proved in \cite[Proposition 4.2]{KLT17} that a ring $A \bowtie^{f,g}(J,J')$ is Noetherian if and only if $f(A)+J$ and $g(A)+J'$ are Noetherian. The main purpose of the rest section is to study Nil$_{\ast}$-Noetherian properties on bi-amalgamated algebras.

\begin{proposition}\label{Amalgamation-1}
Let $f:A\rightarrow B$ and $g:A\rightarrow C$ be two ring homomorphisms and let $J$ and
$J'$ be two ideals of $B$ and $C$, respectively, such that $f^{-1}(J)=g^{-1}(J'):=I_0$ is nil.  Then $A \bowtie^{f,g}(J,J')$ is Nil$_{\ast}$-Noetherian if and only if $f(A)+J$ and $g(A)+J'$ are Nil$_{\ast}$-Noetherian.
\end{proposition}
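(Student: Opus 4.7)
The plan is to exploit the two canonical surjective ring homomorphisms $\pi_1: A\bowtie^{f,g}(J,J')\twoheadrightarrow f(A)+J$ and $\pi_2: A\bowtie^{f,g}(J,J')\twoheadrightarrow g(A)+J'$ furnished by Proposition~\ref{pro-Ama}(2), combined with the explicit description of $\Nil(A\bowtie^{f,g}(J,J'))$ given in Lemma~\ref{nil-Ama}(1). Note that neither kernel $0\times J'$ nor $J\times 0$ need be a nil ideal of the bi-amalgamation (only $I_0$ is assumed nil), so Lemma~\ref{quo} does not apply directly and both implications require more care.

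For necessity, assume $A\bowtie^{f,g}(J,J')$ is Nil$_{\ast}$-Noetherian and let $L$ be a nil ideal of $f(A)+J$. I would introduce the lifted ideal
\[
\widetilde{L}:=\pi_1^{-1}(L)\cap \Nil(A\bowtie^{f,g}(J,J')),
\]
which is an intersection of two ideals contained in the nilradical, hence itself a nil ideal of the bi-amalgamation. The key observation is that $\pi_1(\widetilde{L})=L$: given $x=f(a)+j\in L$, nilpotency of $x$ together with $I_0$ nil forces $a\in\Nil(A)$ via binomial expansion of $x^n=0$, so by Lemma~\ref{nil-Ama}(1) the element $(x,g(a))$ is a nilpotent lift of $x$ lying in $\widetilde{L}$. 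By hypothesis $\widetilde{L}$ is finitely generated, hence so is its image $L=\pi_1(\widetilde{L})$. The same argument via $\pi_2$ handles $g(A)+J'$.

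For sufficiency, assume both $f(A)+J$ and $g(A)+J'$ are Nil$_{\ast}$-Noetherian and let $N$ be a nil ideal of the bi-amalgamation. Since $\pi_1$ is a surjective ring homomorphism, $\pi_1(N)$ is a nil ideal of $f(A)+J$, hence finitely generated; pick generators $x_1,\dots,x_m$ and choose lifts $\xi_i=(x_i,y_i)\in N$. Next, consider $K:=N\cap (0\times J')$; its image $\pi_2(K)$ consists of second coordinates of nilpotent elements of $N$ and is therefore a nil ideal of $g(A)+J'$, hence finitely generated. Lifting these generators back to elements $\kappa_1,\dots,\kappa_p\in K$ of the form $(0,z_\ell)$ and using the surjectivity of $\pi_2$, a direct computation shows that $\kappa_1,\dots,\kappa_p$ generate $K$ as an ideal of $A\bowtie^{f,g}(J,J')$. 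Finally, for any $(x,y)\in N$, writing $x=\sum s_i x_i$ in $f(A)+J$ and lifting each $s_i$ via $\pi_1$-surjectivity produces $\sum\tilde{s}_i\xi_i\in N$ with the same first coordinate as $(x,y)$, so the difference lies in $K$, yielding $N=\langle\xi_1,\dots,\xi_m,\kappa_1,\dots,\kappa_p\rangle$.

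The main technical obstacle will be the necessity part, where the hypothesis that $I_0$ is nil is essential in order to ensure that every element of $L$ admits a nilpotent lift to the bi-amalgamation; without this, $\widetilde{L}$ could fail to surject onto $L$ and the whole argument would collapse. The sufficiency is more mechanical, but requires carefully combining generators coming from two distinct ideals by exploiting the pullback structure of the bi-amalgamation.
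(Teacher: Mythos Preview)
Your proof is correct and takes a genuinely different route from the paper's. The paper opens by asserting that since $I_0$ is nil, $J$ and $J'$ are both nil, whence $J\times 0$ and $0\times J'$ are nil ideals of the bi-amalgamation; necessity then follows at once from Lemma~\ref{quo} and Proposition~\ref{pro-Ama}(2). For sufficiency the paper projects a generating set of a nil ideal $I$ to both factors, extracts a single finite index set $\Lambda_0$ generating both projections, and asserts without further argument that the $\Lambda_0$-indexed elements already generate $I$ itself. Your observation that $I_0$ nil does not force $J$ or $J'$ to be nil is correct (only $J\cap f(A)$ is guaranteed to consist of nilpotents), so the paper's shortcut in the necessity direction rests on an unjustified claim; your construction $\widetilde{L}=\pi_1^{-1}(L)\cap\Nil(A\bowtie^{f,g}(J,J'))$ together with the binomial argument and Lemma~\ref{nil-Ama}(1) is exactly what is needed to repair it. Likewise, your sufficiency argument via the decomposition $N=\langle\xi_1,\dots,\xi_m\rangle + (N\cap\ker\pi_1)$ is more robust than the paper's, since it does not require one finite index set to work simultaneously in both coordinates. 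In short, your argument is longer but cleaner, and it closes gaps present in the paper's own proof.
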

\begin{proof} Note that since $I_0$ is nil, $J$ and $J'$ are both nil. Then $J\times 0$ and  $0\times J'$ are also nil ideals of  $A \bowtie^{f,g}(J,J')$.  Suppose  $A \bowtie^{f,g}(J,J')$ is Nil$_{\ast}$-Noetherian.  Hence  $f(A)+J$ and $g(A)+J'$ are Nil$_{\ast}$-Noetherian by Lemma \ref{quo} and Proposition \ref{pro-Ama}(2). On the other hand, suppose $f(A)+J$ and $g(A)+J'$ are Nil$_{\ast}$-Noetherian. Let $I$ be a nil ideal of $A \bowtie^{f,g}(J,J')$ generated by $\{(f(a_i)+j_i,g(a_i)+j'_i)\mid i\in \Lambda\}$. Then the ideals $\langle f(a_i)+j_i\mid i\in \Lambda\rangle\subseteq \Nil(f(A)+J)$ and $\langle  g(a_i)+j'_i\mid i\in \Lambda\rangle\subseteq \Nil(g(A)+J')$ by Lemma \ref{nil-Ama}. So there is a finitely subset $\Lambda_0\subseteq \Lambda$ such that $\langle f(a_i)+j_i\mid i\in \Lambda\rangle =\langle f(a_i)+j_i\mid i\in \Lambda_0\rangle$ and $\langle  g(a_i)+j'_i\mid i\in \Lambda\rangle=\langle  g(a_i)+j'_i\mid i\in \Lambda_0\rangle$. Hence $I$ is a nil ideal of $A \bowtie^{f,g}(J,J')$ generated by $\{(f(a_i)+j_i,g(a_i)+j'_i)\mid i\in \Lambda_0\}$. So $A \bowtie^{f,g}(J,J')$ is Nil$_{\ast}$-Noetherian.
\end{proof}

\begin{theorem}\label{Amalgamation-2}
Let $f:A\rightarrow B$ and $g:A\rightarrow C$ be two ring homomorphisms and let $J$ and
$J'$ be two ideals of $B$ and $C$, respectively, such that $f^{-1}(J)=g^{-1}(J')$. If  $J\subseteq \Im(f)$, $J'\subseteq \Im(g)$, $\Ker(f)$ and $\Ker(g)$ are nil,  then $A \bowtie^{f,g}(J,J')$ is Nil$_{\ast}$-Noetherian if and only if $f(A)$ and $g(A)$ are Nil$_{\ast}$-Noetherian.
\end{theorem}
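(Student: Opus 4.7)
The plan is to exploit Lemma~\ref{nil-Ama}(2). Since $J \subseteq \Im(f)$ and $J' \subseteq \Im(g)$, we have $f(A)+J = f(A)$ and $g(A)+J' = g(A)$, so the lemma simplifies to
$$\Nil(A \bowtie^{f,g}(J,J')) = \Nil(A) \bowtie^{f,g}(J\cap \Nil(f(A)),\, J' \cap \Nil(g(A))).$$
Denote this nil radical by $\mathcal{N}$. My argument will make repeated use of the ring homomorphism $\iota : A \ra A \bowtie^{f,g}(J,J')$ sending $a \mapsto (f(a), g(a))$, together with the coordinate projections $\pi_1, \pi_2$ of $A \bowtie^{f,g}(J,J')$ onto $f(A)+J = f(A)$ and $g(A)+J' = g(A)$.

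For the forward direction, let $K$ be a nil ideal of $f(A)$ and set $I := f^{-1}(K)$. Since $\Ker(f)$ is nil, every $a \in I$ satisfies $a^n \in \Ker(f) \subseteq \Nil(A)$ for some $n$ and is therefore nilpotent, so $I$ is a nil ideal of $A$. I would then check that $\widetilde I := \langle \iota(I) \rangle$, the ideal generated in the bi-amalgamation, is itself nil, since expanding an arbitrary element shows that each coordinate is a finite sum of nilpotents in a commutative ring. By hypothesis $\widetilde I$ is finitely generated, and a standard refinement lets the generators be chosen from $\iota(I)$, so $\widetilde I = \langle \iota(a_1), \ldots, \iota(a_n) \rangle$ with $a_i \in I$. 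Applying $\pi_1$ and using $f(A)+J = f(A)$ gives
$$K = f(I) = \pi_1(\widetilde I) = \langle f(a_1), \ldots, f(a_n) \rangle_{f(A)},$$
which is finitely generated; the symmetric argument via $\pi_2$ handles $g(A)$.

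For the converse, I would establish the short exact sequence of $A \bowtie^{f,g}(J,J')$-modules
$$0 \ra 0 \times (J' \cap \Nil(g(A))) \ra \mathcal{N} \ra \Nil(f(A)) \ra 0,$$
where the second map is the restriction of $\pi_1$. Surjectivity follows because any $\xi \in \Nil(f(A))$ lifts to some $a \in \Nil(A)$ with $f(a) = \xi$ (using $\Ker(f)$ nil), and then $\iota(a) \in \mathcal{N}$ maps to $\xi$; the kernel is identified as $0 \times (J' \cap \Nil(g(A)))$ by the symmetric lifting along $g$. Surjectivity of $\pi_1$ makes the $A \bowtie^{f,g}(J,J')$-submodules of $\Nil(f(A))$ coincide with its $f(A)$-submodules, and the $A \bowtie^{f,g}(J,J')$-action on $0 \times (J' \cap \Nil(g(A)))$ factors through $\pi_2$ into the $g(A)$-module $J' \cap \Nil(g(A)) \subseteq \Nil(g(A))$. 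By Proposition~\ref{nil-Noe-ideal}, the hypothesis that $f(A)$ and $g(A)$ are Nil$_{\ast}$-Noetherian makes both outer terms -- and hence $\mathcal{N}$ -- Noetherian as $A \bowtie^{f,g}(J,J')$-modules. Since every nil ideal of $A \bowtie^{f,g}(J,J')$ is a submodule of $\mathcal{N}$, the desired conclusion follows.

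The main obstacle will be the bookkeeping around this short exact sequence: verifying that its outer terms are Noetherian over the bi-amalgamation itself (rather than merely over $f(A)$ or $g(A)$) is precisely where all four hypotheses -- the inclusions $J \subseteq \Im(f)$, $J' \subseteq \Im(g)$, and the nilpotency of both $\Ker(f)$ and $\Ker(g)$ -- must come together.
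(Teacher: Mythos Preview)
Your forward direction is essentially the paper's argument: both lift a nil ideal $K$ of $f(A)$ back to a nil ideal of $A$ via the nil kernel, push it into the bi-amalgamation through $\iota$, use Nil$_\ast$-Noetherianity there, and project back with $\pi_1$. The only cosmetic difference is that the paper starts from a generating set $\{f(a_i)\}$ of $K$ while you start from $I=f^{-1}(K)$; after the ``standard refinement'' step these coincide.

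Your converse, however, is genuinely different from the paper's and in fact cleaner. The paper takes an arbitrary generating family $\{(f(a_i)+j_i,\,g(a_i)+j'_i)\}_{i\in\Lambda}$ of a nil ideal $I$, projects by $\pi_1$ and $\pi_2$ to obtain nil ideals of $f(A)$ and $g(A)$, chooses a finite $\Lambda_0\subseteq\Lambda$ generating both projections, and then asserts that $\{(f(a_i)+j_i,\,g(a_i)+j'_i)\}_{i\in\Lambda_0}$ generates $I$ itself. That last inference is delicate, since the coefficients witnessing generation in $f(A)$ and in $g(A)$ need not be compatible in $A\bowtie^{f,g}(J,J')$. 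Your short exact sequence
\[
0 \longrightarrow 0\times\bigl(J'\cap\Nil(g(A))\bigr) \longrightarrow \mathcal{N} \longrightarrow \Nil(f(A)) \longrightarrow 0
\]
sidesteps this entirely: once you observe that the outer terms are Noetherian over $A\bowtie^{f,g}(J,J')$ (because $\pi_1,\pi_2$ are surjective onto $f(A),g(A)$, so the submodule lattices coincide with those over $f(A)$ and $g(A)$, where Proposition~\ref{nil-Noe-ideal} applies), the middle term $\mathcal{N}$ is Noetherian and every nil ideal, being contained in $\mathcal{N}$, is finitely generated. This is both rigorous and conceptually transparent. The verification of exactness you sketch is correct; note in particular that identifying the kernel uses $f^{-1}(J)=g^{-1}(J')$ but not the nilpotency of $\Ker(g)$, which is needed only in the forward direction for the symmetric argument on $g(A)$.
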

\begin{proof} Since $J\subseteq \Im(f)$, we have   $f(A)+J=f(A)$. Similarly, $g(A)+J'=g(A)$. Suppose $A \bowtie^{f,g}(J,J')$ is Nil$_{\ast}$-Noetherian.
Let $I$ be a nil ideal of $f(A)$ generated by $\{f(a_i)\mid a_i\in \Lambda\}$. Since $\Ker(f)$ is nil, each $a_i$ is also nilpotent. Consider the ideal $K$ of $A \bowtie^{f,g}(J,J')$ generated by $\{(f(a_i),g(a_i))\mid a_i\in \Lambda\}$. Then $K$ is nil and thus finitely generated. So there is a finite subset $\Lambda_0\subseteq \Lambda$ such that $K$  is generated by $\{(f(a_i),g(a_i))\mid a_i\in \Lambda_0\}$. Thus  $I$ is generated by  $\{f(a_i)\mid a_i\in \Lambda_0\}$. Hence $f(A)$ is Nil$_{\ast}$-Noetherian. Similarly, we have $g(A)$ is Nil$_{\ast}$-Noetherian.

Suppose  $f(A)$ and $g(A)$ are Nil$_{\ast}$-Noetherian. Let $I$ be a nil ideal of $A \bowtie^{f,g}(J,J')$ generated by $\{(f(a_i)+j_i,g(a_i)+j'_i)\mid i\in \Lambda\}\subseteq f(A)\times g(A)$. Then the ideals $\langle f(a_i)+j_i\mid i\in \Lambda\rangle\subseteq \Nil(f(A)+J)=\Nil(f(A))$ and $\langle  g(a_i)+j'_i\mid i\in \Lambda\rangle\subseteq \Nil(g(A)+J')=\Nil(g(A))$ with each $a_i\in \Nil(A)$ by Lemma \ref{nil-Ama}. So there is a finitely subset $\Lambda_0\subseteq \Lambda$ such that $\langle f(a_i)+j_i\mid i\in \Lambda\rangle =\langle f(a_i)+j_i\mid i\in \Lambda_0\rangle$ and $\langle  g(a_i)+j'_i\mid i\in \Lambda\rangle=\langle  g(a_i)+j'_i\mid i\in \Lambda_0\rangle$. Hence $I$ is a nil ideal of $A \bowtie^{f,g}(J,J')$ generated by $\{(f(a_i)+j_i,g(a_i)+j'_i)\mid i\in \Lambda_0\}$. So $A \bowtie^{f,g}(J,J')$ is Nil$_{\ast}$-Noetherian.
\end{proof}
Recall from \cite{DFF09} that, by setting $g=\Id_A: A\rightarrow A$ to be the identity  homomorphism of $A$, we denote by $A \bowtie^{f}J=A \bowtie^{f,\Id_A}(J,f^{-1}(J))$ and call it the amalgamated algebra of $A$ with $B$ along $J$
with respect to $f$.  Note that in this situation, the ring homomorphism $i:A\rightarrow A \bowtie^{f}J$, defined by $i(a)=(a,f(a))$ for any $a\in A$, is an $A$-module retract.

\begin{proposition}\label{nil-noe-ama}
Let $f:A\rightarrow B$ be a ring homomorphism and  $J$  an ideal of $B$.  If $A \bowtie^{f}J$ is Nil$_{\ast}$-Noetherian, so is $A$. Moreover, if $J\subseteq \Im(f)$ and  $\Ker(f)$ is nil, or $f^{-1}(J)$ is nil, then $A \bowtie^{f}J$ is Nil$_{\ast}$-Noetherian if and only if $A$ is Nil$_{\ast}$-Noetherian.
\end{proposition}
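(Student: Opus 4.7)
For the first assertion, the plan is to invoke the $A$-module retract $i: A \hookrightarrow A \bowtie^f J$, $i(a) = (a, f(a))$, recorded in the paragraph preceding the statement; Proposition \ref{ret} then transfers Nil$_{\ast}$-Noetherianity from $A \bowtie^f J$ down to $A$ without any further hypothesis on $f$ or $J$.

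For the ``moreover'' equivalence, I would view $A \bowtie^f J$ as the bi-amalgamated algebra $A \bowtie^{f, \Id_A}(J, f^{-1}(J))$ and specialize the earlier results. With $g = \Id_A$ and $J' = f^{-1}(J)$, the compatibility $f^{-1}(J) = g^{-1}(J')$ is automatic, $\Ker(g) = 0$ is nil, and $J' \subseteq \Im(g) = A$. In case (a), where $J \subseteq \Im(f)$ and $\Ker(f)$ is nil, Theorem \ref{Amalgamation-2} then yields $A \bowtie^f J$ Nil$_{\ast}$-Noetherian iff $f(A)$ and $g(A) = A$ are. To close the converse direction here, it suffices to observe that $f(A) \cong A/\Ker(f)$ is a quotient of $A$ by a nil ideal, and hence Nil$_{\ast}$-Noetherian whenever $A$ is, by Lemma \ref{quo}. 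In case (b), where $f^{-1}(J)$ is nil, Proposition \ref{Amalgamation-1} with $I_0 = f^{-1}(J)$ yields $A \bowtie^f J$ Nil$_{\ast}$-Noetherian iff $f(A) + J$ and $A + f^{-1}(J) = A$ are.

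The main obstacle I foresee lies in case (b): one must derive Nil$_{\ast}$-Noetherianity of $f(A) + J$ from that of $A$. My plan is to use Proposition \ref{pro-Ama}(3) to identify $(f(A) + J)/J \cong A/f^{-1}(J)$, which is Nil$_{\ast}$-Noetherian by Lemma \ref{quo} since $f^{-1}(J)$ is nil, and then, exploiting that $J$ behaves as a nil ideal in $f(A) + J$ in the sense used inside Proposition \ref{Amalgamation-1}, to combine this quotient information with the decomposition of $\Nil(f(A) + J)$ afforded by Lemma \ref{nil-Ama} so as to control an arbitrary nil ideal of $f(A) + J$ with finitely many generators. The delicate point---and the technical heart of the argument---is precisely this control of the $J$-component of a nil ideal of $f(A)+J$, rather than the invocation of the bi-amalgamated framework itself.
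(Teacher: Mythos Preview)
Your strategy coincides with the paper's line for line: the retract $i\colon A\to A\bowtie^f J$ together with Proposition~\ref{ret} handles the first assertion; for case~(a) the paper also specializes Theorem~\ref{Amalgamation-2} and then invokes $f(A)\cong A/\Ker(f)$ with Lemma~\ref{quo}; and for case~(b) the paper likewise cites Proposition~\ref{Amalgamation-1}. The paper's proof in case~(b) stops at that citation and does not separate out the step you flag, namely deducing that $f(A)+J$ is Nil$_{\ast}$-Noetherian from the hypothesis on~$A$.

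Your proposed resolution of that step, however, cannot succeed. Lemma~\ref{nil-Ama} describes $\Nil$ of the bi-amalgamation, not of $f(A)+J$, so it gives no control over nil ideals of $f(A)+J$ contained in~$J$; and the quotient identification $(f(A)+J)/J\cong A/f^{-1}(J)$ helps only modulo~$J$, with no converse to Lemma~\ref{quo} available to lift the conclusion back. In fact the desired implication is false in general: take $A=k$ a field, $B=k[x_1,x_2,\dots]/(x_ix_j:i,j\ge1)$ with $f$ the inclusion and $J=(x_1,x_2,\dots)$. Then $f^{-1}(J)=0$ is nil and $A$ is Nil$_{\ast}$-Noetherian, but $\Nil(A\bowtie^f J)=\{(j,0):j\in J\}$, on which $A\bowtie^f J$ acts through~$k$ (since $J^2=0$), is infinite-dimensional over~$k$ and hence not finitely generated. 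So the obstacle you isolated is a genuine gap that the paper's bare citation of Proposition~\ref{Amalgamation-1} does not close either.
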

\begin{proof} Suppose $A \bowtie^{f}J$ is Nil$_{\ast}$-Noetherian.  Then $A$ is Nil$_{\ast}$-Noetherian by Proposition  \ref{ret}.  Since $f(A)\cong A/\Ker(f)$, the equivalence follows by Lemma \ref{quo}, Theorem \ref{Amalgamation-2} and Proposition \ref{Amalgamation-1} respectively.
\end{proof}

Recall from \cite{DF07} that, by setting  $f=\Id_A: A\rightarrow A$ to be the identity  homomorphism of $A$, we denote by $A \bowtie J=A \bowtie^{\Id_A}J$ and call it the amalgamated algebra of $A$ along $J$. By Proposition \ref{nil-noe-ama}, we obviously have the following result.
\begin{corollary} Let $J$ be an ideal of $A$. Then $A\bowtie J$ is Nil$_{\ast}$-Noetherian if and only if $A$ is  Nil$_{\ast}$-Noetherian.
\end{corollary}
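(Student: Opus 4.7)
The plan is to observe that this corollary is essentially a direct specialization of Proposition \ref{nil-noe-ama} to the case $f = \Id_A$. First I would unpack the definition: by construction $A \bowtie J = A \bowtie^{\Id_A} J$, so the underlying ring homomorphism in the amalgamation is $f = \Id_A : A \to A$. I would then verify that the hypothesis of the ``moreover'' part of Proposition \ref{nil-noe-ama} that relies on $\Ker(f)$ being nil applies here, since $\Ker(\Id_A) = 0$, which is trivially a nil ideal of $A$.

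Next I would check that the inclusion condition $J \subseteq \Im(f)$ is automatic, since $\Im(\Id_A) = A \supseteq J$ for any ideal $J$ of $A$. With both hypotheses verified, the biconditional ``$A \bowtie^{f} J$ is Nil$_{\ast}$-Noetherian $\Leftrightarrow$ $A$ is Nil$_{\ast}$-Noetherian'' from Proposition \ref{nil-noe-ama} applies directly and yields the claim. Since this is a one-step deduction, there is no real obstacle; the only thing that requires attention is pointing out which branch of the ``moreover'' clause one is invoking (here the branch ``$J \subseteq \Im(f)$ and $\Ker(f)$ is nil''), so that the reader does not need to also check the alternative nil hypothesis on $f^{-1}(J)$.
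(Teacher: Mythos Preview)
Your proposal is correct and matches the paper's approach exactly: the paper simply states that the corollary follows obviously from Proposition \ref{nil-noe-ama}, and you have spelled out precisely why, namely that with $f=\Id_A$ one has $J\subseteq \Im(f)=A$ and $\Ker(f)=0$ is nil, so the first branch of the ``moreover'' clause applies.
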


\section{Module-theoretic characterizations of Nil$_{\ast}$-Noetherian rings}

We begin this section with the following two conceptions.
\begin{definition}
An $R$-module $M$ is said to be
\begin{enumerate}
    \item Nil$_{\ast}$-injective, provided that $\Ext_R^1(R/I,M)=0$ for any nil ideal $I$;
\item Nil$_{\ast}$-FP-injective, provided that $\Ext_R^1(R/I,M)=0$ for any finitely generated nil ideal $I$.
\end{enumerate}
\end{definition}

Trivially, injective modules are Nil$_{\ast}$-injective; Nil$_{\ast}$-injective modules and FP-injective  modules are Nil$_{\ast}$-FP-injective. The class of Nil$_{\ast}$-injective modules is closed under direct products; the class of Nil$_{\ast}$-FP-injective is is closed under direct sums and direct products.

The following result shows that non-trivial nil ideals  can never be projective.
\begin{lemma}\label{non-zeronil-not-proj}\cite[Proposition 6.7.12]{fk16}
Let $I$ be a non-zero nil ideal of a ring $R$. Then $I$ is not projective.
\end{lemma}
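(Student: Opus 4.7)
My plan is a proof by contradiction, reducing to the local case via localization. Suppose, for contradiction, that $I$ is a nonzero nil ideal of $R$ which is projective as an $R$-module. Choose any $0 \neq a \in I$; since $\ann_R(a) \neq R$, some maximal ideal $\mathfrak{m}$ contains $\ann_R(a)$, so $a/1 \neq 0$ in $R_\mathfrak{m}$. Because localization preserves projectivity and sends ideals to ideals, $I_\mathfrak{m}$ is a projective ideal of $R_\mathfrak{m}$; it is nonzero by construction, and remains a nil ideal since $r^n=0$ implies $(r/s)^n=0$.

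Next, I would invoke Kaplansky's theorem: every projective module over a local ring is free. This yields a basis $\{e_\lambda\}_{\lambda\in\Lambda}$ of $I_\mathfrak{m}$ as an $R_\mathfrak{m}$-module, which is nonempty since $I_\mathfrak{m}\neq 0$. Fix any basis element $e:=e_{\lambda_0}$. Viewing $e$ as an element of $R_\mathfrak{m}$ via $I_\mathfrak{m}\subseteq R_\mathfrak{m}$, it is nilpotent, so there is a minimal integer $n\geq 2$ with $e^n=0$, and by minimality $e^{n-1}\neq 0$ in $R_\mathfrak{m}$.

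The contradiction is then immediate. The identity $e^{n-1}\cdot e = e^n = 0$, read as a module equation, asserts that the scalar $e^{n-1}\in R_\mathfrak{m}$ annihilates the basis element $e$ of the free module $I_\mathfrak{m}$. Basis elements of a free module have trivial annihilator, so $e^{n-1}=0$, contradicting the minimality of $n$.

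The main obstacle is the appeal to Kaplansky's theorem in the possibly non-finitely-generated setting. When $I$ is finitely generated one can avoid this by a quicker route: tensoring the inclusion $I\hookrightarrow R$ with $R/I$ and using flatness of the projective ideal $I$ yields an injection $I/I^2\hookrightarrow R/I$ whose image is obviously zero, hence $I=I^2$; since a finitely generated nil ideal is nilpotent, iterating produces $I=I^N=0$, again contradicting $I\neq 0$.
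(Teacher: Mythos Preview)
Your argument is correct. The paper does not supply its own proof of this lemma; it simply cites \cite[Proposition~6.7.12]{fk16}, so there is no in-paper argument to compare against. Your localization-plus-Kaplansky route is a standard and clean way to handle the general case, and the concern you flag is not actually an obstacle: Kaplansky's theorem that every projective module over a local ring is free holds without any finiteness hypothesis on the module, so the reduction goes through for arbitrary nil ideals. Your alternative argument for the finitely generated case via $I=I^2$ and nilpotence is a pleasant bonus but not needed.
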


We first characterize rings over which all modules are Nil$_{\ast}$-injective or Nil$_{\ast}$-FP-injective.
\begin{proposition}
Let $R$ be a ring. Then the following statements are equivalent.
\begin{enumerate}
    \item $R$ is a reduced ring.
    \item Every $R$-module is Nil$_{\ast}$-injective.
     \item Every $R$-module is Nil$_{\ast}$-FP-injective.
\end{enumerate}
\end{proposition}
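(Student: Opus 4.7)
The plan is to establish the cycle $(1) \Rightarrow (2) \Rightarrow (3) \Rightarrow (1)$. The first two implications are essentially immediate: if $R$ is reduced then $\Nil(R)=0$, so the only nil ideal is the zero ideal, and the defining condition for Nil$_\ast$-injectivity reduces to $\Ext_R^1(R/0,M)=\Ext_R^1(R,M)=0$, which holds for every $M$ because $R$ is projective over itself. The step $(2)\Rightarrow(3)$ is a tautology since every finitely generated nil ideal is a nil ideal.

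The real content lies in $(3)\Rightarrow(1)$, and here the strategy is to convert the Ext-vanishing hypothesis into a projectivity statement and then invoke Lemma \ref{non-zeronil-not-proj}. Given $r\in\Nil(R)$, the principal ideal $\langle r\rangle$ is finitely generated, and it is nil because in the commutative setting $(sr)^n = s^n r^n = 0$ whenever $r^n=0$. Hypothesis (3) applied with $I=\langle r\rangle$ gives $\Ext_R^1(R/\langle r\rangle, M)=0$ for every $R$-module $M$, so $R/\langle r\rangle$ is a projective $R$-module. The canonical short exact sequence
\begin{equation*}
0 \longrightarrow \langle r\rangle \longrightarrow R \longrightarrow R/\langle r\rangle \longrightarrow 0
\end{equation*}
therefore splits, exhibiting $\langle r\rangle$ as a direct summand of $R$, hence projective.

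At this point Lemma \ref{non-zeronil-not-proj} forces $\langle r\rangle = 0$, i.e., $r=0$. Since $r\in\Nil(R)$ was arbitrary, $\Nil(R)=0$ and $R$ is reduced, completing the cycle.

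I do not anticipate a genuine obstacle: the argument is a standard Ext-to-projectivity conversion followed by the already-proved structural fact that nonzero nil ideals cannot be projective. The only mild point worth flagging in the write-up is the verification that $\langle r\rangle$ is nil (not merely that its generator is nilpotent), so that Lemma \ref{non-zeronil-not-proj} genuinely applies; this is one line but should be stated explicitly to keep the logical chain transparent.
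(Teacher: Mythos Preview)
Your proof is correct and follows essentially the same approach as the paper: both arguments split the short exact sequence $0\to I\to R\to R/I\to 0$ for a finitely generated nil ideal and then invoke Lemma~\ref{non-zeronil-not-proj} to force $I=0$. The only cosmetic difference is that the paper obtains the splitting by noting that $I$ itself is Nil$_\ast$-FP-injective (so $\Ext^1_R(R/I,I)=0$), whereas you obtain it by noting that $R/\langle r\rangle$ is projective (since $\Ext^1_R(R/\langle r\rangle,-)\equiv 0$); these are dual ways to the same end.
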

\begin{proof} $(1)\Rightarrow (2)\Rightarrow (3)$: Trivial.

$(3)\Rightarrow (1)$: Let $I$ be a finitely generated nil ideal of $R$. Consider the exact sequence $0\rightarrow I\rightarrow R\rightarrow R/I\rightarrow 0$. Then $I$ is Nil$_{\ast}$-FP-injective by $(3)$. So the exact sequence splits. Then $I$ is projective.  So each finitely generated nil ideal $I$, and thus $\Nil(R)$ is equal to $0$ by Lemma \ref{non-zeronil-not-proj}.
\end{proof}

We characterize $\Nil_{\ast}$-coherent rings in terms of Nil$_{\ast}$-FP-injective modules.

\begin{proposition}\label{s-FP-injective-ext}
Let $R$ be a ring. Then  $R$ is  Nil$_{\ast}$-coherent if and only if any direct limit of injective $R$-modules is Nil$_{\ast}$-FP-injective.
\end{proposition}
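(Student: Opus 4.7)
The plan is to prove the equivalence via two implications, using the classical fact that $\Ext^1(R/I,-)$ commutes with direct limits when $I$ is finitely presented.

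For the forward direction, suppose $R$ is Nil$_{\ast}$-coherent and let $E=\varinjlim_\lambda E_\lambda$ be a direct limit of injective $R$-modules. Given a finitely generated nil ideal $I$, Nil$_{\ast}$-coherence furnishes a finite presentation $R^m\to R^n\to I\to 0$. Applying $\Hom(-,E)$ and using that $\Hom(R^k,-)$ commutes with direct limits, I obtain $\Hom(I,\varinjlim E_\lambda)\cong \varinjlim\Hom(I,E_\lambda)$. From $0\to I\to R\to R/I\to 0$ together with $\Ext^1(R,-)=0$, I identify $\Ext^1(R/I,E)$ as a cokernel that coincides with $\varinjlim\Ext^1(R/I,E_\lambda)=0$, so $E$ is Nil$_{\ast}$-FP-injective.

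For the converse, let $I=\langle a_1,\ldots,a_n\rangle$ be a finitely generated nil ideal, let $\pi:R^n\twoheadrightarrow I$ be the natural surjection and $K=\Ker\pi$. I write $K$ as the union of its finitely generated submodules $K_\lambda$; each quotient $R^n/K_\lambda$ is finitely presented, the transitions $R^n/K_\lambda\twoheadrightarrow R^n/K_\mu$ (for $K_\lambda\subseteq K_\mu$) have colimit $R^n/K\cong I$. After passing to a well-ordered cofinal chain of such $K_\lambda$'s, I construct by transfinite induction a directed system of injective modules $E_\lambda\supseteq R^n/K_\lambda$: at successor stages I take $E_{\lambda+1}=E(R^n/K_{\lambda+1})$ and extend the map $R^n/K_\lambda\twoheadrightarrow R^n/K_{\lambda+1}\hookrightarrow E_{\lambda+1}$ to a morphism $E_\lambda\to E_{\lambda+1}$ via injectivity; at limit stages I replace the colimit by its injective hull. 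This yields a directed system of injectives with a natural embedding $\iota:I\hookrightarrow \varinjlim E_\lambda$.

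By hypothesis $\Ext^1(R/I,\varinjlim E_\lambda)=0$, so $\iota$ extends to a homomorphism $R\to\varinjlim E_\lambda$ given by multiplication by some element $m$ represented by $m_\mu\in E_\mu$. After enlarging $\mu$ to make all relations hold simultaneously, the equations $a_im_\mu=e_i$ hold in $E_\mu$ for every $i$, where $e_i$ denotes the image of the $i$-th standard basis vector in $R^n/K_\mu\hookrightarrow E_\mu$. For any $(r_1,\ldots,r_n)\in K$, the identity $\sum r_ia_i=0$ in $R$ forces $\sum r_ie_i=\sum r_i(a_im_\mu)=\big(\sum r_ia_i\big)m_\mu=0$ in $E_\mu$, and since $R^n/K_\mu\hookrightarrow E_\mu$ this forces $(r_1,\ldots,r_n)\in K_\mu$. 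Consequently $K=K_\mu$ is finitely generated and $I$ is finitely presented. The chief technical obstacle is assembling the $E_\lambda$'s into a genuine directed system: the injective extensions of the transition surjections are not canonical, so composability of the transition maps fails for arbitrary choices, and this is precisely what the transfinite construction along a well-ordered cofinal chain (taking injective hulls of colimits at limit stages) is designed to circumvent.
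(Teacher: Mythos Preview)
Your proof is correct. The forward implication is essentially identical to the paper's argument: both use that for a finitely presented $I$ the functor $\Hom_R(I,-)$ commutes with direct limits, and then read off $\Ext^1_R(R/I,\varinjlim E_\lambda)\cong\varinjlim\Ext^1_R(R/I,E_\lambda)=0$ from the long exact sequence of $0\to I\to R\to R/I\to 0$.

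For the converse you take a genuinely different route. The paper argues via the Lenzing criterion: for an \emph{arbitrary} directed system $\{M_i\}$ and any $\alpha:I\to\varinjlim M_i$, it passes to the system of injective envelopes $\{E(M_i)\}$, uses the hypothesis to extend $\alpha$ to $\beta:R\to\varinjlim E(M_i)$, factors $\beta$ through some $E(M_j)$ because $R$ is cyclic, and then uses finite generation of $I$ to push the factorization down into some $M_k$; surjectivity of $\varinjlim\Hom(I,M_i)\to\Hom(I,\varinjlim M_i)$ for all systems then forces $I$ to be finitely presented. You instead fix a presentation $R^n\twoheadrightarrow I$ with kernel $K=\bigcup K_\lambda$, build a \emph{single} tailored directed system of injectives containing the $R^n/K_\lambda$, and show directly that the extension of $I\hookrightarrow\varinjlim E_\lambda$ to $R$ forces $K=K_\mu$ for some $\mu$. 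Your approach is more hands-on and avoids citing the Lenzing characterization; it also confronts head-on the coherence problem for transition maps between injective envelopes, which the paper's proof passes over in silence when it writes $\varinjlim E(M_i)$ without specifying a directed system structure. The paper's argument is shorter and more conceptual once one grants that $\{E(M_i)\}$ can be organised into a directed system compatibly with $\{M_i\}$ and $\{E(M_i)/M_i\}$; your argument trades brevity for self-containment.
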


\begin{proof}
Let $I$ be a finitely generated nil ideal of $R$ and $\{M_i\}_{i\in I}$  a direct system of  injective $R$-modules. Then $\lim\limits_{\longrightarrow }\Ext^1_R(R/I,M_i)=0$ for each $i\in I$. Consider the short exact sequence $0\rightarrow I\rightarrow R\rightarrow R/I\rightarrow 0$, we have the following commutative diagram with rows exact:
$$\xymatrix{
 \ar[r]^{} &\lim\limits_{\longrightarrow } \Hom_R(R,M_i) \ar[d]_{\varphi_R}\ar[r]^{} & \lim\limits_{\longrightarrow }\Hom_R(I,M_i) \ar[r]^{}\ar[d]^{\varphi_I}& \lim\limits_{\longrightarrow }\Ext^1_R(R/I,M_i) \ar[r]^{}\ar[d]^{\varphi^1_{R/I}}&0 \\
 \ar[r]^{} &\Hom_R(R,\lim\limits_{\longrightarrow } M_i) \ar[r]^{} &\Hom_R(I,\lim\limits_{\longrightarrow } M_i)\ar[r]^{} & \Ext^1_R(R/I,\lim\limits_{\longrightarrow } M_i) \ar[r]^{} &0.
}$$
Since $R$ is a Nil$_{\ast}$-coherent ring, $I$ is a finitely presented ideal, then $\varphi_{I}$ is an isomorphism. Since $\varphi_{R}$ is an isomorphism, then $\varphi^1_{R/I}$ is also an isomorphism. Consequently, $\lim\limits_{\longrightarrow } M_i$ is a Nil$_{\ast}$-FP-injective $R$-module.

 Let $I$ be a finitely generated nil ideal, $\{M_i\}_{i\in I}$ a direct limit of  $R$-modules. Suppose $\alpha: I\rightarrow \lim\limits_{\longrightarrow }M_i$ is an $R$-homomorphism. For any $i\in I$, $E(M_i)$ is the injective  envelope of $M_i$. Then $\alpha$ can be extended to  be $\beta:R\rightarrow \lim\limits_{\longrightarrow }E(M_i)$. So there exists $j\in I$ such that $\beta$ can factor through $R\rightarrow E(M_j)$. Since the composition $I\rightarrow R\rightarrow E(M_j)\rightarrow E(M_j)/M_j$ becomes to be $0$ in the direct limit. We can assume  $I\rightarrow R\rightarrow E(M_j)$ factors through  $M_j$. Then $\alpha$ factor through $M_j$. So the natural homomorphism $\lim\limits_{\longrightarrow } \Hom_R(I,M_i)\rightarrow \Hom_R(I, \lim\limits_{\longrightarrow }M_i)$ is an epimorphism. So $I$ is a finitely presented ideal.
\end{proof}

\begin{theorem}\label{s-injective-ext} {\bf (Cartan-Eilenberg-Bass Theorem for  Nil$_{\ast}$-rings)}
Let $R$ be a ring. Then the following assertions are equivalent:
\begin{enumerate}
\item  $R$ is  Nil$_{\ast}$-Noetherian;
\item  any direct sum of Nil$_{\ast}$-injective $R$-modules is Nil$_{\ast}$-injective;
\item  any direct sum of injective $R$-modules is Nil$_{\ast}$-injective;
\item  any direct union of Nil$_{\ast}$-injective $R$-modules is Nil$_{\ast}$-injective;
\item  any direct union of injective $R$-modules is Nil$_{\ast}$-injective;
\item  any Nil$_{\ast}$-FP-injective  $R$-module is  Nil$_{\ast}$-FP-injective.
\end{enumerate}
\end{theorem}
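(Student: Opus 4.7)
My plan is to close the cycle $(1)\Rightarrow(2)\Rightarrow(3)\Rightarrow(1)$ and the parallel cycle $(1)\Rightarrow(4)\Rightarrow(5)\Rightarrow(1)$, and to handle $(1)\Leftrightarrow(6)$ separately. The implications $(2)\Rightarrow(3)$ and $(4)\Rightarrow(5)$ are free because every injective module is trivially Nil$_{\ast}$-injective. Thus all real content is in $(1)\Rightarrow(2)$ and $(1)\Rightarrow(4)$ (forward directions), the converse $(3)\Rightarrow(1)$ (the heart of the Cartan--Eilenberg--Bass trick), and the corresponding converse for directed unions.

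\textbf{Forward directions.} For $(1)\Rightarrow(2)$ I would use the Baer-type criterion: a module $M$ is Nil$_{\ast}$-injective iff every homomorphism from a nil ideal into $M$ extends to $R$. Given a nil ideal $I$ and $f\colon I\to \bigoplus_{\lambda\in\Lambda}M_{\lambda}$, Nil$_{\ast}$-Noetherianness implies $I=Ra_1+\cdots+Ra_n$ is finitely generated, so $f(I)\subseteq\bigoplus_{\lambda\in F}M_{\lambda}$ for the finite set $F$ supporting $f(a_1),\dots,f(a_n)$. Since a finite direct sum of Nil$_{\ast}$-injectives coincides with their direct product, and the class is closed under products, $f$ extends to $R$ and hence into the big sum. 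The argument for $(1)\Rightarrow(4)$ is the same, only now the finitely generated image $f(I)$ already lies inside a single term $M_{\alpha}$ of the directed union, where extension is immediate. For $(1)\Rightarrow(6)$, every nil ideal is already finitely generated, so the definitions of Nil$_{\ast}$-injectivity and Nil$_{\ast}$-FP-injectivity coincide.

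\textbf{The key converse.} For $(3)\Rightarrow(1)$ I would argue by contradiction: if $R$ fails to be Nil$_{\ast}$-Noetherian, Proposition~\ref{nil-Noe-ideal} yields a strictly ascending chain of nil ideals $I_1\subsetneq I_2\subsetneq\cdots$ with nil union $I:=\bigcup_{n\geq 1}I_n$. Let $E_n$ be the injective envelope of $R/I_n$ and define
$$\varphi\colon I\longrightarrow \bigoplus_{n\geq 1}E_n,\qquad \varphi(x)=(x+I_n)_{n\geq 1}.$$
The crucial observation is well-definedness of the codomain: each $x\in I$ lies in some $I_{n_x}$, so $x+I_n=0$ for all $n\geq n_x$, and $\varphi(x)$ has only finitely many nonzero components. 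By $(3)$, $\bigoplus_n E_n$ is Nil$_{\ast}$-injective, so $\varphi$ extends to $\psi\colon R\to\bigoplus_n E_n$; writing $\psi(1)=(y_n)_{n\geq 1}$ with $y_n=0$ for $n>N$ and evaluating $\varphi(x)=x\psi(1)$ on any $x\in I$ gives $x+I_n=xy_n$ in $E_n$, hence $x\in I_n$ whenever $n>N$. This forces $I=I_n$ for all $n>N$, contradicting the strict ascent. For $(5)\Rightarrow(1)$ I would run exactly the same contradiction after noting that $\bigoplus_n E_n$ is the directed union of its finite subsums $E_1\oplus\cdots\oplus E_n$, each of which is injective. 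For $(6)\Rightarrow(3)$ I would check that $R/I$ is finitely presented whenever $I$ is a finitely generated ideal (from the presentation $0\to I\to R\to R/I\to 0$), so $\Ext^1_R(R/I,-)$ commutes with arbitrary direct sums; thus a direct sum of injectives is Nil$_{\ast}$-FP-injective, and $(6)$ upgrades this to Nil$_{\ast}$-injective.

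\textbf{Main obstacle.} The delicate step is the construction of $\varphi$ in $(3)\Rightarrow(1)$: the entire mechanism hinges on the image landing in the direct sum rather than the direct product, and this is precisely what distinguishes the Nil$_{\ast}$-Noetherian condition. Once $\varphi$ is in place the contradiction is essentially automatic. A secondary subtlety in $(6)\Rightarrow(3)$ is that Ext-commutation with direct sums in the second variable requires finite presentation of the first variable, which is why one needs $I$ to be finitely generated rather than merely nil.
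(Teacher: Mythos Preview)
Your proposal is correct. The overall architecture matches the paper's --- a Bass--Papp style contradiction for the hard converse and trivial implications elsewhere --- but your execution is simpler in two places, and it is worth noting the differences.

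For the forward directions $(1)\Rightarrow(2)$ and $(1)\Rightarrow(4)$ you use the Baer-type extension criterion directly: a finitely generated nil ideal maps into a finite subsum (or a single term of the directed union), where extension is immediate. The paper instead proves $(1)\Rightarrow(4)$ by a five-lemma style commutative diagram comparing $\lim\limits_{\longrightarrow}\Hom_R(I,M_i)$ with $\Hom_R(I,\lim\limits_{\longrightarrow}M_i)$ and invoking \cite[Theorem 24.10]{W91}; it then gets $(2)$ and $(5)$ as trivial consequences of $(4)$. Your argument avoids the external reference and is more self-contained.

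For the key converse $(3)\Rightarrow(1)$, your construction with $\varphi(x)=(x+I_n)_n\in\bigoplus_n E(R/I_n)$ is the classical Bass argument and is cleaner than the paper's. The paper instead builds a strictly ascending chain $A_i=\sum_{j\leq i}a_jR$, invokes \cite[Corollary 10.5]{AD92} to interpolate maximal sub-ideals $C_i\subsetneq A_i$, and works with $E(A_i/C_i)$; the extra layer of maximal sub-ideals is not needed, since your direct use of $R/I_n$ already lands in the injective envelope and the contradiction follows from $R/I_n\hookrightarrow E_n$. Your reduction of $(5)\Rightarrow(1)$ to the same contradiction, via the observation that $\bigoplus_n E_n$ is a directed union of the injective finite subsums, is also correct and does not appear explicitly in the paper (which routes $(5)\Rightarrow(3)$ trivially instead).

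For $(6)$ you correctly read the intended statement (Nil$_{\ast}$-FP-injective implies Nil$_{\ast}$-injective; the printed ``Nil$_{\ast}$-FP-injective'' on the right is a typo). Your $(6)\Rightarrow(3)$ via finite presentation of $R/I$ and commutation of $\Ext^1_R(R/I,-)$ with direct sums is fine; the paper instead does $(6)\Rightarrow(2)$ using the stated fact that the class of Nil$_{\ast}$-FP-injectives is closed under direct sums, which amounts to the same computation.
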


\begin{proof}
$(1)\Rightarrow (6)$, $(4)\Rightarrow (2)\Rightarrow (3)$ and $(4)\Rightarrow (5)\Rightarrow (3):$  Trivial.

$(1)\Rightarrow (4):$ Let $\{M_i,f_{i,j}\}_{i<j\in \Lambda}$ be a direct system of   Nil$_{\ast}$-injective $R$-modules, where each $f_{i,j}$ is an inclusion map.
Set by $\lim\limits_{\longrightarrow}{M_i}$ the direct limit. Let $I$ be nil ideal of $R$, then $I$ is finitely generated. So $R/I$ is finitely presented. Consider the following commutative diagram with exact rows:
$$\xymatrix{
 \ar[r]^{} &\lim\limits_{\longrightarrow } \Hom_R(R,M_i) \ar[d]_{\varphi_R}\ar[r]^{} & \lim\limits_{\longrightarrow }\Hom_R(I,M_i) \ar[r]^{}\ar[d]^{\varphi_I}& \lim\limits_{\longrightarrow }\Ext^1_R(R/I,M_i) \ar[r]^{}\ar[d]^{\varphi^1_{R/I}}&0 \\
 \ar[r]^{} &\Hom_R(R,\lim\limits_{\longrightarrow } M_i) \ar[r]^{} &\Hom_R(I,\lim\limits_{\longrightarrow } M_i)\ar[r]^{} & \Ext^1_R(R/I,\lim\limits_{\longrightarrow } M_i) \ar[r]^{} &0.
}$$
By \cite[Theorem 24.10]{W91}, $\varphi_R$ and $\varphi_I$ are isomorphisms. So $\varphi^1_{R/I}$  is also an isomorphism. Hence, $\lim\limits_{\longrightarrow}{M_i}$ is Nil$_{\ast}$-injective.

$(3)\Rightarrow(1):$ On contrary, suppose $R$ is not Nil$_{\ast}$-Noetherian. Hence there is a non-finitely generated nil ideal $I$. So there is a nilpotent element $a_0\in I$ such that $a_0R\not=I$. Take $0\not=a_1\in I-a_0R$, then the nil ideal $a_1R+a_0R\not=I$. Take $a_2\in I-(a_1R+a_0R)$, then the nil ideal $a_1R+a_2R+a_0R\not=I$. Repeat these steps, we can get a strictly increasing chain of nil ideals:
$$a_0R+a_1R\subsetneq a_0R+a_1R+a_2R\subsetneq \cdots\subsetneq a_0R+a_1R+\cdots+a_nR\subsetneq \cdots$$
Set $A_i=\sum\limits_{j=0}^ia_jR$. According to \cite[Corollary 10.5]{AD92}, for any $A_i$, there exists maximal sub-ideal $C_i$ of $A_i$ satisfying $$C_1\subsetneq A_1\subseteq C_2\subsetneq A_2\subseteq\cdots\subseteq C_n\subsetneq A_n\subseteq\cdots$$ Set $E(A_i/C_i)$ the injective envelope of $A_i/C_i$.
Write $E=\bigoplus\limits_{i=1}^\infty E(A_i/C_i)$. Then $E$ is Nil$_{\ast}$-injective by (2). Set $A=\bigcup\limits_{i=1}^\infty A_i$. Then $A$ is a nil ideal. Set $\pi_i:A_i\twoheadrightarrow A_i/C_i$ the natural epimorphism and $\rho_i:A_i/C_i\hookrightarrow E(A_i/C_i)$ the inclusion map. Then there are homomorphism $f_i:A\rightarrow E(A_i/C_i)$ which is an extension of $\rho_i\circ \pi_i$.
Let $a\in A$ and $f:A\rightarrow E$ satisfies $f(a)=(f_i(a))_{i=1}^{\infty}$. Since $E$ is Nil$_{\ast}$-injective, then $f$ can be extended to be $g:R\rightarrow E$ with $g(r)=g(1)r$. Set $g(1)=(c_1,c_2,\cdots,c_n,0,\cdots)$. Take $a\in A$ such that  $a\in A_{n+1}-C_{n+1}$. Then $\rho_{n+1}\pi_{n+1}(a)\not=0$, and so $f_{n+1}(a)\not=0.$ However, $$ f(a)=g(a)=g(1)a=(c_1a,c_2a,\cdots,c_na,0,\cdots),$$ which is a contradiction. Hence, $R$ is  Nil$_{\ast}$-Noetherian.

$(6)\Rightarrow(2):$ Follows by any direct sum of  Nil$_{\ast}$-FP-injective  $R$-modules  is Nil$_{\ast}$-FP-injective.
\end{proof}

\begin{remark}\label{dirlim-not} We must remark that a ring $R$ is  Nil$_{\ast}$-Noetherian if and only if any direct union of Nil$_{\ast}$-injective $R$-modules is Nil$_{\ast}$-injective in Theorem \ref{s-injective-ext}. So  if any direct limit of Nil$_{\ast}$-injective $R$-modules is Nil$_{\ast}$-injective, then  $R$ is a Nil$_{\ast}$-Noetherian ring. However, the converse does not hold in general.  Indeed, let  $R$ is a Nil$_{\ast}$-Noetherian ring which is not Nil$_{\ast}$-coherent (see Example \ref{nil-noe-not-nil-coh}). Then  the class of  Nil$_{\ast}$-FP-injective  $R$-modules is equal to that of   Nil$_{\ast}$-FP-injective modules. However, by Proposition \ref{s-FP-injective-ext}, there exists a direct system of  Nil$_{\ast}$-injective $R$-modules, whose direct limit is not Nil$_{\ast}$-injective.
\end{remark}

\begin{acknowledgement}\quad\\
The  author was supported by the National Natural Science Foundation of China (No. 12061001).
\end{acknowledgement}

\bigskip

\end{document}